\tikzset{>=latex}
\DeclareFontFamily{U}{wncy}{}
    \DeclareFontShape{U}{wncy}{m}{n}{<->wncyr10}{}
    \DeclareSymbolFont{mcy}{U}{wncy}{m}{n}
    \DeclareMathSymbol{\Sh}{\mathord}{mcy}{"58}
\theoremstyle{plain}
\newtheorem{thm}{Theorem}
\newtheorem{theorem}{Theorem}[section]
\newtheorem{lem}[theorem]{Lemma}
\newtheorem{lemma}{Lemma}
\newtheorem{cor}[theorem]{Corollary}
\newtheorem{prop}[theorem]{Proposition}
\theoremstyle{definition} \theoremstyle{definition}
\newtheorem{remark}{Remark}
\theoremstyle{remark}
\newcommand{\Sc}{\mathcal{S}}
\newcommand{\M}{\mathfrak{R}}
\newcommand{\Hecke}{\mathcal{H}}
\newcommand{\G}{\textsc{\G}}
\newcommand{\p}{\mathfrak{p}}
\newcommand{\m}{\mathfrak{m}}
\newcommand{\U}{\mathcal{U}}
\newcommand{\Z}{\mathbb{Z}}
\newcommand{\C}{\mathbb{C}}
\newcommand{\wM}{\widehat{M}}
\newcommand{\Ann}{{\rm Ann}}
\newcommand{\Tor}{{\rm Tor}}
\newcommand{\Hom}{{\rm Hom}}
\newcommand{\EP}{{\rm EP}}
\newcommand{\Ext}{{\rm Ext}}
\newcommand{\Ind}{{\rm Ind}}
\newcommand{\ind}{{\rm ind}}
\def\G{{\rm G}}
\def\Sp{{\rm Sp}}
\def\St{{\rm St}}
\def\U{{\rm U}}
\def\GL{{\rm GL}}
\def\PGL{{\rm PGL}}
\def\SO{{\rm SO}}
\def\OO{{\rm O}}
\def\O{{\mathcal O}}
\begin{document}

\title[On a duality theorem of Schneider-Stuhler]
{On a duality theorem of Schneider-Stuhler}
\author{Madhav Nori and Dipendra Prasad}
\address{University of Chicago, Chicago, IL 60637, USA.}
\email{nori@math.uchicago.edu}

\address{Tata Institute of Fundamental 
Research, Homi Bhabha Road, Bombay - 400 005, INDIA.}

\email{dprasad@math.tifr.res.in}

\subjclass{Primary 11F70; Secondary 22E55}

\begin{abstract} We extend a  duality theorem of Schneider-Stuhler
  \cite{schneider-stuhler:sheaves} about $\Ext^i[\pi_1,\pi_2]$ proved there
  for smooth representations of a $p$-adic group $G$
  with central characters to all smooth representations
  assuming their result for only irreducible representations by generalities in
  homological algebra.
  \end{abstract}

\maketitle
{\hfill \today}

\vspace{4mm}

\tableofcontents

\section{Introduction} \label{parameter}

Let 
$\underline{G}$ be a connected reductive algebraic group
over $F$, a non-archimedean local field, and
$G=\underline{G}(F)$ the locally compact group of $F$ rational
points of $\underline{G}$. In 
\cite{schneider-stuhler:sheaves},
page 133, Schneider-Stuhler
prove a {\it Duality Theorem} relating
$\Ext^i[\pi_1,\pi_2]$
with certain ${\rm Tor}^{n-i}[D(\pi_1),\pi_2]$
where $\pi_1$ is a smooth representation of $G$ of finite length, 
$\pi_2$ is a general  smooth representation of $G$, and $D(\pi_1)$ denotes the Aubert-Zelevinsky involution of $\pi_1$.

The theorem of Schneider-Stuhler however assumed that $\pi_1,\pi_2$ had a central character, and  $\Ext^i[\pi_1,\pi_2]$ is calculated in the category of smooth representations of $G$ with that central character. In the presence of non-compact center, the category of smooth representations of $G$ cannot be decomposed using central characters, and therefore to  prove analogous results about $\Ext^i[\pi_1,\pi_2]$ where $\pi_1,\pi_2$ are general
smooth representations of $G$, and one of the representations is irreducible, does not seem a consequence of the theorem of Schneider-Stuhler. For some of the applications the second author
had in mind in \cite{EP} dealing with $\Ext^i_{\GL_n(F)}[\pi_1,\pi_2]$ where $\pi_1,\pi_2$ are smooth representations of
$\GL_n(F)$ with $\pi_1$ the restriction to $\GL_n(F)$ of an irreducible smooth representation of $\GL_{n+1}(F)$, it was important not to restrict oneself to smooth representations with a given central character.

In this paper we give a proof of the Schneider-Stuhler duality theorem
without assuming any conditions on central characters. In fact our proof uses 
the Schneider-Stuhler duality theorem in the simplest possible case,
dealing with $\Ext^i[\pi_1,\pi_2]$ where $\pi_1,\pi_2$ are smooth and irreducible
representations of $G$.

The idea behind the paper can be summarized as follows. Let $A$ be a finitely generated
commutative algebra over $\C$, and $B$ an associative but
not necessarily commutative algebra containing $A$ in its center, such that $B$ is  finitely generated as an $A$-module. Suppose $M \rightarrow {\mathcal F}(M)$
is a functor
from the category of $B$-modules to the category of $A$-modules. Then one can understand the functor $M \rightarrow {\mathcal F}(M)$
via the intermediaries of the functors
$M\rightarrow {\mathcal F}(M/\m^k M)$ and $M\rightarrow {\mathcal F}(\widehat{M})$
where $\m$ is a maximal ideal in $A$, and $\widehat{M}=
\displaystyle{\lim_{\leftarrow}\, }(M/\m^\ell M)$. For us, ${\mathcal F}(M) = \Ext_B^i(N,M)$ or $\Ext_B^j(M,N)$ for $N$ a simple $B$-module on which $A$ acts via $A/\m$. That we are in this
algebraic setup for the extension problem in $p$-adic groups is by the work of Bernstein on `Bernstein center'. The paper assumes that we understand the functor $M \rightarrow {\mathcal F}(M)$ for $M$ a simple $B$-module (through the work of Schneider-Stuhler) and proceeds in steps,
first extending their result (now for representations without central characters) to $M$ of finite length as a $B$-module, then for $M$ finitely generated over $B$, and finally for $M$ an arbitrary $B$-module!

\section{Aubert-Zelevinsky involution}
In this section we discuss the Aubert-Zelevinsky involution 
$\pi \rightarrow D(\pi)$ cf. \cite{aubert}, in some detail as it plays a pivotal role in the Schneider-Stuhler theorem.

We continue with $\underline{G}$ a connected reductive algebraic group
over $F$, a non-archimedean local field, and
$G=\underline{G}(F)$ the locally compact group of $F$ rational
points of $\underline{G}$. Let $\pi$ be an irreducible  smooth representation of $G$. 
Associated to $\pi$ is the  Aubert-Zelevinsky involution $D(\pi)$ of $\pi$ 
which is an element of the Grothendieck group of  smooth representations of $G$ of finite length, 
and defined by:
$$D(\pi)= \Ind_{P_\phi}^G (R_{P_\phi} \pi) -\sum_{P1}{\Ind}_{P1}^G(R_{P1} \pi)
+ \sum_{P2}{\Ind}_{P2}^G(R_{P2} \pi) -+...,$$
where $P_\phi$ is a fixed  minimal parabolic in $G$, $P1$ the next larger parabolics in $G$ containing $P_\phi$,  $P2$ next larger
parabolics etc.; $R_{Pi}\pi$ are the normalized Jacquet
modules of the representation $\pi$ with respect to the parabolic $Pi$, and ${\Ind}$ denotes normalized parabolic induction. More precisely, it has been proved by Deligne-Lusztig for finite
groups of Lie type in \cite{DL}, and Aubert in \cite{aubert} for $p$-adic groups, that 
for $\pi$ an irreducible representation of $G$, the cohomology of the following natural complex (that we will call
Deligne-Lusztig-Aubert complex):
$$0 \rightarrow \pi \rightarrow  \sum_{ |I| = |S|-1}{\Ind}_{P_I}^G(R_{P_I} \pi) \rightarrow \sum_{|I|=|S|-2}{\Ind}_{PI}^G(R_{PI} \pi)
\rightarrow \cdots \rightarrow  \sum_{ |I|=|S|-i}{\Ind}_{PI}^G(R_{PI} \pi)\rightarrow 0 ,$$
is concentrated in top degree, defining $D(\pi)$ up to a sign; here
$S$ denotes the set of simple roots of $G$ with respect to
a  maximal split torus of $G$
contained in  a minimal parabolic $P_\phi$ with
$s=|S|$, and for $I\subset S$, $P_I$ denotes the corresponding parabolic subgroup of $G$ containing $P_\phi$; $i$ denotes the largest integer for which $R_{PI} \pi $ is nonzero for
some $I\subset S$ with $|I|=s-i$.

There is another  involution on the category of finite length
smooth representations of $G(F)$ due to Bernstein in \cite{lectures}. Denote
this other involution as $\pi \rightarrow D'(\pi)$, which  is defined as:
$$D'(\pi) = \Ext^d_{G(F)}[\pi, {\mathcal H}(G(F))],$$
where ${\mathcal H}(G(F))$ is the Hecke algebra of $G(F)$, and where 
$d$ is the only integer
for which $\Ext^d_{G(F)}[\pi, {\mathcal H}(G(F))]$ is nonzero (that there is only one $d$ for an irreducible representation of $G(F)$ is part of \cite{lectures}). 
The Hecke algebra of $G(F)$ being both left and right $G(F)$-module, $\Ext^d_{G(F)}[\pi, {\mathcal H}(G(F))],$ taken for the left $G(F)$ action
on ${\mathcal H}(G(F))$ is a right $G(F)$-module.

It is a consequence of the work of Schneider-Stuhler \cite{schneider-stuhler:sheaves}
that  for an irreducible smooth representation $\pi$ of $G(F)$, $D'(\pi) \cong D(\pi^\vee)$. 
It is known  from \cite{aubert} and \cite{schneider-stuhler:sheaves}
that $D(\pi)$ takes irreducible representations of $G$ to irreducible representations of $G$ (up to  a sign).

In this paper we will simply call $|D(\pi)|$ as the Aubert-Zelevinsky involution,
and  denote it  as $D(\pi)$.

\begin{remark}
  We would like to note two consequences of the Aubert-Zelevinsky involution
  defined through the above Deligne-Lusztig-Aubert complex.
  First, it manifestly implies that if $\pi$ is any irreducible representation of $G(F)$ then $D(\pi)$ is not only a representation in the Grothendieck group of representations of $G(F)$, but an honest representation of $G(F)$ (this is the key for Deligne-Lusztig as well as Aubert for its irreducibility). Second, the definition of $D(\pi)$ via the complex
  makes sense if $\pi$ is any smooth representation of $G(F)$ with all its subquotients having cuspidal supports in the same  Levi subgroup, say $M$ (the complex still has cohomology
  only in the top degree; Aubert's proof in \cite{aubert} never used irreducibility of $\pi$). Since any smooth
  representation of $G(F)$ is a direct sum of representations with
  cuspidal supports in different (standard) Levi subgroups,
  $\pi \rightarrow D(\pi)$ becomes an {\it exact} covariant functor
  from the category of {\it all} smooth representations of $G(F)$ to the category of smooth representations of $G(F)$. It would be interesting to know 
  if the known isomorphism $D'(\pi) \cong D(\pi^\vee)$ for
  $\pi$ an irreducible representation
  of $G(F)$ holds good for $\pi$ any finite length  representation
  of $G(F)$ (better still, a functorial isomorphism), and therefore, if known properties of
  $D'$ on finite length representations due to Bernstein in \cite{lectures} can be
  transported to $D(\pi)$ for finite length representations of $G(F)$; specially, if
  $D(D(\pi))  \cong\pi$, and 
  $D(\Ind_P^G \pi) \cong  \Ind_{P^-}^G D(\pi)$ (where $P^-$ is the parabolic which is
  opposite of P) hold good for for all smooth representations of $G(F)$ of finite length
  (these are usually asserted only up to semi-simplification).
\end{remark}

Here is an example to put the ideas in the previous remark  for use in understanding
the Aubert-Zelevinsky involution in some explicit cases using non-semi-simple representations in an essential way. 

\begin{prop} Let $G=\underline{G}(F)$ be a reductive $p$-adic group with $P=MN$
  a parabolic in $G$.
  Let $V$
be a {\it regular} supercuspidal representation of $M$, i.e., $V^w \not \cong V$ for $w$ any nontrivial element of  
$W_M\backslash W_G/W_M
$ where $W_G$ (resp. $W_M$) is the Weyl group
of $G$ (resp. of $M$) for a maximal split torus in $G$, where $V^w \not \cong V$ means that
either $w$ does not preserve $M$, or if it does, it does not preserve the
isomorphism class of $V$.
Then the principal series representation ${\rm Ind}_P^GV$ 
has a unique irreducible quotient representation $Q(V)$,
and a unique irreducible sub-representation $S(V)$.
The Aubert-Zelevinsky involution of $Q(V)$ is $S(V)$, and that of
$S(V)$ is $Q(V)$. 
\end{prop}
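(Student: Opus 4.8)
The plan is to realize $S(V)$ and $Q(V)$ as the unique irreducible subrepresentation and quotient of $\Ind_P^GV$, to identify $D(\Ind_P^GV)$ with $\Ind_{P^-}^GV$ ($P^-$ the opposite parabolic), and then to transport everything through the exact functor $D$. Write $\Ind_Q^G$ for normalized parabolic induction from a parabolic $Q$ with Levi $L$ and $r_Q^G$ for the associated normalized Jacquet functor; we use Frobenius reciprocity $\Hom_G(\tau,\Ind_Q^G\sigma)\cong\Hom_L(r_Q^G\tau,\sigma)$ and Bernstein's second adjunction $\Hom_G(\Ind_Q^G\sigma,\tau)\cong\Hom_L(\sigma,r_{Q^-}^G\tau)$. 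After conjugating we may take $M$ to be a standard Levi.

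\emph{Step 1: unique irreducible sub and quotient.} By the Bernstein--Zelevinsky geometric lemma, $r_{P^-}^G\Ind_P^GV$ and $r_P^G\Ind_P^GV$ are filtered by pieces indexed by $W_M\backslash W_G/W_M$; as $V$ is supercuspidal the piece attached to $w$ vanishes unless $wMw^{-1}=M$, in which case (the normalization absorbing the modulus characters) it is $V^w$. By regularity the surviving pieces are pairwise non-isomorphic irreducible supercuspidals of $M$, among which $V$ (the $w=1$ piece) occurs once. Distinct irreducible supercuspidals of $M$ have vanishing $\Ext^1_M$ (they lie in distinct Bernstein components, or are distinct points of a common one), so both filtrations split: $r_{P^-}^G\Ind_P^GV\cong\bigoplus_w V^w\cong r_P^G\Ind_P^GV$, multiplicity-free semisimple $M$-modules containing $V$ once. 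Second adjunction then gives $\Hom_G(\Ind_P^GV,\Ind_P^GV)\cong\Hom_M(V,\bigoplus_w V^w)=\C$, so $\Ind_P^GV$ is indecomposable, and it has nonzero socle and cosocle since it is of finite length. If $\pi_1\ncong\pi_2$ were both irreducible subrepresentations of $\Ind_P^GV$, then $\pi_1\oplus\pi_2\hookrightarrow\Ind_P^GV$, hence $r_P^G\pi_1\oplus r_P^G\pi_2\hookrightarrow r_P^G\Ind_P^GV$; by Frobenius reciprocity $V$ is a quotient, hence --- these Jacquet modules being semisimple --- a direct summand, of each $r_P^G\pi_j$, forcing $V$ to appear at least twice in $\bigoplus_wV^w$, a contradiction. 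So $\Ind_P^GV$ has a unique irreducible subrepresentation $S(V)$, with multiplicity one; the same argument using second adjunction in place of Frobenius reciprocity gives a unique irreducible quotient $Q(V)$, again with multiplicity one.

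\emph{Step 2: $D(\Ind_P^GV)\cong\Ind_{P^-}^GV$ --- the crux.} Since $V$ is supercuspidal, all its proper Jacquet modules vanish, so the Deligne--Lusztig--Aubert complex of $V$ over $M$ collapses to $0\to V\to 0$ and $D_M(V)=V$ literally. The compatibility $D_G\circ\Ind_P^G\cong\Ind_{P^-}^G\circ D_M$ is classically recorded only up to semisimplification (cf. the Remark above), but the source of imprecision is the $D_M$-side, which is here the identity; so one runs the Deligne--Lusztig--Aubert complex of $\Ind_P^GV$ directly. By the geometric lemma, the term $\bigoplus_{|I|=s-k}\Ind_{P_I}^Gr_{P_I}^G\Ind_P^GV$ decomposes: $r_{P_I}^G\Ind_P^GV$ is filtered by the $\Ind_{M_I\cap wPw^{-1}}^{M_I}(V^w)$ with $wMw^{-1}\subseteq M_I$, and --- exactly as in Step 1 --- this filtration splits because its graded pieces lie in distinct Bernstein components of $M_I$. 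Inducing back to $G$ and conjugating each $V^w$ (a $G$-conjugate of $V$) back to $M$, one finds that the whole complex, differentials included, is isomorphic to the complex computing the Aubert dual of $\Ind_{P^-}^GV$; concentration of the cohomology in the top degree then yields $D(\Ind_P^GV)\cong\Ind_{P^-}^GV$. Keeping track of which parabolics occur, and matching the differentials, is the only genuine computation in the argument --- and the main obstacle.

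\emph{Step 3: transport and identification.} By the Remark, $D$ is an exact covariant functor on the category of smooth representations all of whose subquotients have cuspidal support in $M$. Applying it to $0\to\mathrm{rad}(\Ind_P^GV)\to\Ind_P^GV\to Q(V)\to 0$ gives a surjection $\Ind_{P^-}^GV\cong D(\Ind_P^GV)\twoheadrightarrow D(Q(V))$, and $D(Q(V))$ is irreducible by \cite{aubert}, so it is the unique irreducible quotient of $\Ind_{P^-}^GV$; likewise $D(S(V))$ is the unique irreducible subrepresentation of $\Ind_{P^-}^GV$. It remains to see that the unique irreducible quotient of $\Ind_{P^-}^GV$ is $S(V)$ and its unique irreducible subrepresentation is $Q(V)$. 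Now $S(V)\hookrightarrow\Ind_P^GV$ gives $\Hom_M(r_P^GS(V),V)\neq0$ by Frobenius reciprocity; as $r_P^GS(V)$ is a submodule of the semisimple $r_P^G\Ind_P^GV$, the representation $V$ is a direct summand of $r_P^GS(V)$, so $\Hom_M(V,r_P^GS(V))\neq0$, which by second adjunction equals $\Hom_G(\Ind_{P^-}^GV,S(V))$; hence $S(V)$ is the unique irreducible quotient of $\Ind_{P^-}^GV$. Symmetrically, $\Ind_P^GV\twoheadrightarrow Q(V)$ shows, via second adjunction, that $V$ is a direct summand of the semisimple module $r_{P^-}^GQ(V)$, whence $\Hom_G(Q(V),\Ind_{P^-}^GV)\cong\Hom_M(r_{P^-}^GQ(V),V)\neq0$ and $Q(V)$ is the unique irreducible subrepresentation of $\Ind_{P^-}^GV$. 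Combining, $D(Q(V))=S(V)$ and $D(S(V))=Q(V)$, as claimed.
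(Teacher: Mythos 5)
Your Step 1 is sound and close in spirit to the paper's: both use the geometric lemma and regularity to get a multiplicity-free semisimple Jacquet module, then Frobenius reciprocity (and, in your version, also second adjunction) to obtain a unique irreducible sub and quotient. Your Step 3 is a clean transport argument and would work as stated. The genuine gap is your Step 2, which you yourself flag as ``the main obstacle'': you need the isomorphism $D(\Ind_P^G V)\cong\Ind_{P^-}^G V$ \emph{as representations, not merely in the Grothendieck group}, but you do not actually prove it. Your claim that ``the source of imprecision is the $D_M$-side, which is here the identity'' does not dissolve the difficulty: even with $D_M(V)=V$ on the nose, the compatibility $D_G\circ\Ind_P^G\cong\Ind_{P^-}^G\circ D_M$ is only recorded up to semisimplification, precisely because the issue lies in comparing the Deligne--Lusztig--Aubert complex of $\Ind_P^G V$ with the target representation, not in $D_M$. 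Your sketch of matching, term by term and differential by differential, the Aubert complex of $\Ind_P^G V$ with one computing $\Ind_{P^-}^G V$ is exactly the unproved content; nothing in the geometric lemma by itself matches the boundary maps. Indeed the paper's Remark explicitly lists $D(\Ind_P^G\pi)\cong\Ind_{P^-}^G D(\pi)$ for finite-length $\pi$ (not just up to semisimplification) among things the authors say they do \emph{not} know.

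The paper circumvents this by never invoking the covariant $D$ on the reducible module $\Ind_P^G V$. Instead it uses Bernstein's contravariant exact involution $D'(\pi)=\Ext^d_G[\pi,\mathcal H(G)]$, for which the compatibility $D'(\Ind_P^G V)=\Ind_{P^-}^G(V^\vee)$ \emph{is} a theorem (Theorem 31 of Bernstein's lectures), together with the identity $D(\pi)\cong D'(\pi^\vee)$ on irreducibles. Since $S(V)\hookrightarrow\Ind_P^G V$, contravariance gives a surjection $\Ind_{P^-}^G(V^\vee)\twoheadrightarrow D'(S(V))$; it then remains only to show $Q(V)^\vee$ is also a quotient of $\Ind_{P^-}^G(V^\vee)$, which reduces via dualization and the second adjointness isomorphism $(\pi^\vee)_{N^-}\cong(\pi_N)^\vee$ to Step 1. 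This substitutes a known theorem about $D'$ for the unavailable fact about $D$ that your argument needs, and it applies $D$ (via $D'$) only to irreducibles, where its behaviour is understood. If you want to salvage your route you would have to actually prove $D(\Ind_P^G V)\cong\Ind_{P^-}^G V$ in this regular case; as it stands, that step is a placeholder, not a proof.
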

\begin{proof}By the geometric lemma,
  the Jacquet module $R_N({\rm Ind}_P^GV)$
  with respect to $P$ is up to
  semi-simplification, the representation of $M$ 
  $$R_N \left ({\rm Ind}_P^GV \right )
  \cong  \sum_{w \in W_M\backslash W_G/W_M}
  V^w,$$ where the sum is taken over
those elements of the  double coset space
$W_M\backslash W_G/W_M$ which preserve $M$. By hypothesis,
  this sum consists
  of distinct supercuspidal representations of $M$, and therefore, the 
  Jacquet module of ${\rm Ind}_P^GV$ with respect to $P$ is 
  semi-simple, and each component appears with multiplicity 1. By a standard application
  of Frobenius reciprocity, the uniqueness of the  irreducible quotient and
  sub-representation follows.

  Although we wish to calculate the Aubert-Zelevinsky involution $D(\pi)$, not knowing its
  properties on finite length representations, we instead will use the other involution
  $D'(\pi)$, and finally use that $D(\pi) \cong D'(\pi^\vee)$ on irreducible
  representations to conclude the proposition.

  Note that $D'$ is an exact contravariant functor with  (cf.
  Theorem 31 of \cite{lectures}):
  $$D' \left ({\rm Ind}_P^GV \right ) =
   {\rm Ind}_{P^-}^G (V^\vee)  ,$$
 where $P^-=MN^-$ is the opposite parabolic to $P=MN$. Now $S(V)$ is a
 subrepresentation of  ${\rm Ind}_P^GV $, therefore
 $D' \left ({\rm Ind}_P^GV \right )
 = {\rm Ind}_{P^-}^G(V^\vee) $
 has the irreducible
 representation $D'(S(V))$ as a
 quotient. But irreducible quotient of
 ${\rm Ind}_{P^-}^G(V^\vee) $
 being unique, if we can
 prove that $Q(V)^\vee$ is a quotient of ${\rm Ind}_{P^-}^G(V^\vee) $,
 then it will follow that 
 $D'(S(V)) \cong Q(V)^\vee$,
 therefore $D(S(V)) \cong Q(V)$ as desired.

 Finally, to
 prove that $Q(V)^\vee$ is a quotient of ${\rm Ind}_{P^-}^G(V^\vee) $, dualizing, we need
 to prove that $Q(V)$ is a sub-representation of  ${\rm Ind}_{P^-}^G(V) $ given that $Q(V)^\vee$ is a sub-representation  of  ${\rm Ind}_{P}^G(V^\vee) $. This amounts by the Frobenius reciprocity to the well known assertion (applied here to $ Q(V)$) that,
 $$ (\pi^\vee)_{N^-} \cong (\pi_N)^\vee,$$
 which is part of the second adjointness theorem of Bernstein, cf. theorem 21 of \cite{lectures}. 
 \end{proof}

\section{The theorem of Schneider-Stuhler}
Let $G=\underline{G}(F)$ be the locally compact group of $F$ rational
points of a reductive algebraic group $\underline{G}$, and $Z$ its center.
Let $\Hecke(G)$ be the Hecke algebra
of $G$, and for a character $\chi:Z \rightarrow \C^\times$,
let $\Hecke_\chi=\Hecke_\chi(G)$ be the Hecke
algebra of $\chi$-invariant functions on $G$ (locally constant
with compact support modulo $Z$). Integration along $Z$:
$f(g) \rightarrow \int_Zf(gz)\chi(z)dz$ defines a surjective
algebra homomorphism from $\Hecke(G)$ to $\Hecke_\chi(G)$.
The algebras $\Hecke(G)$ and $\Hecke_\chi(G)$ are algebras without units but with a rich supply of idempotents which allows one
to define {\it non-degenerate} representations of these algebras which have the property that $\Hecke(G)V=V$
(resp. $\Hecke_\chi(G)V=V$). There is the well-known equivalence of the category of smooth representations of $G$ and non-degenerate representations of $\Hecke(G)$, and similarly
the category of smooth representations of $G$ with central character $\chi$ and non-degenerate representations of $\Hecke_\chi(G)$.

Let $\M(G)$ be the abelian category of smooth representations of $G$,
and for a character $\chi:Z \rightarrow \C^\times$, let
$\M(G;\chi)$ be the abelian sub-category of smooth representations of $G$ on which $Z$ operates by $\chi$. We use $\Ext^i_{G}[V,V']$
to denote Ext groups in $\M(G)$,
and $\Ext^i_{G,\chi}[V,V']$ to denote Ext groups in $\M(G;\chi)$. Similarly,
$\Tor^{{\mathcal H}}_{i}[V, V'],$
or $\Tor^{G}_{i}[V, V'],$ will denote Tor groups for $\Hecke(G)$-modules, and 
$\Tor^{{\mathcal H}_\chi}_{i}[V, V'],$ or $\Tor^{G,\chi}_{i}[V, V'],$
will denote Tor groups for $\Hecke_\chi$-modules.

Here is the theorem of Schneider-Stuhler \cite{schneider-stuhler:sheaves}, page 133.

\begin{thm}
  Let $G$ be a reductive $p$-adic group with $Z$ as its center, and $\chi:Z\rightarrow \C^\times$ a character. Let $V \in \M(G,\chi)$ be
  an irreducible, admissible representation of $G$ appearing as a sub-quotient of a principal series representation
  $\Ind_P^G \rho$ for a cuspidal representation $\rho$ of a Levi subgroup $M$ of $P$ with $d_s=d_s(V)$ = rank of the maximal split torus in $Z(M) \cap [G,G]$.
  Then for any smooth representation $V' \in \M(G,\chi)$ of $G$, there is a
natural isomorphism
$$\Ext^i_{G,\chi}[V,V']
\cong \Tor^{{\mathcal H}_\chi}_{d_s-i}[D(V^\vee), V'],$$
given by the cap product with $\Tor^{{\mathcal H}_\chi}_{d_s}[D(V^\vee), V] \cong \C$.
  \end{thm}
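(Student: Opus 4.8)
All the functors in sight are supported on the single Bernstein block $s=[M,\rho]$ of $V$: both $\Ext^i_{G,\chi}[V,-]$ and $\Tor^{\Hecke_\chi}_j[D(V^\vee),-]$ vanish on representations lying in blocks other than $s$ (for the second, one uses that $D(V^\vee)$, viewed as a right $\Hecke_\chi$-module, is supported on $s$). So I would first replace $\Hecke_\chi$ by the corresponding block algebra $B$: it has a unit, it is module-finite over a Noetherian commutative subalgebra $\mathfrak z$ of its centre that is finitely generated over $\C$, and it has finite cohomological dimension $d_s$; moreover $V$, being admissible and irreducible, is finitely generated over $B$ and admits a resolution of length $d_s$ by finitely generated projective $B$-modules. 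Cap product with the fundamental class $\omega\in\Tor^{B}_{d_s}[D(V^\vee),V]\cong\C$ then gives a natural transformation of $\delta$-functors $c^i_{V'}\colon\Ext^i_{B}[V,V']\to\Tor^{B}_{d_s-i}[D(V^\vee),V']$, compatible with the connecting maps of every short exact sequence in the variable $V'$; what has to be shown is that $c^i_{V'}$ is an isomorphism for all $V'$.

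I would argue by dévissage, the base case being the Schneider--Stuhler theorem in its simplest form: $c^i_{V'}$ is an isomorphism when $V'$ is irreducible (this is the geometric input, via coefficient systems on the Bruhat--Tits building and Aubert's realization of $D$ through the Deligne--Lusztig--Aubert complex). Since $c$ is a morphism of $\delta$-functors, the five lemma and induction on composition length extend this to all finite-length $V'$ in the block. For $V'$ finitely generated over $B$ one completes along the maximal ideals $\m$ of $\mathfrak z$: because $V$ is of type $\mathrm{FP}_\infty$ over the Noetherian ring $B$, the groups $\Ext^i_B[V,V']$ and $\Tor^B_j[D(V^\vee),V']$ are finitely generated over $\mathfrak z$ and commute with $\m$-adic completion, $\widehat{V'}=\varprojlim_\ell V'/\m^\ell V'$ is an inverse limit of the finite-length modules $V'/\m^\ell V'$, and an Artin--Rees / Mittag--Leffler argument shows that $c^i$ for $V'$ is pinned down by the $c^i$ for the $V'/\m^\ell V'$; this disposes of the finitely generated case. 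Finally an arbitrary $V'$ is the filtered colimit of its finitely generated $B$-submodules; $\Tor^B_j[D(V^\vee),-]$ always commutes with filtered colimits, and $\Ext^i_B[V,-]$ does so because $V$ has a finite resolution by finitely generated projectives, so $c^i_{V'}$ is an isomorphism in general.

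The point I expect to be the crux is the finitely generated step together with the finiteness it rests on: one has to know that $V$ is a perfect complex over $B$ --- equivalently that $\mathrm{RHom}_B(V,B)$ is concentrated in the single degree $d_s$, where it is the right $B$-module $D'(V)\cong D(V^\vee)$ --- and that $\Ext$ and $\Tor$ against such a $V$ are insensitive to $\m$-adic completion. These are precisely the finiteness, Gorenstein-type and duality properties of a Bernstein block that lie at the heart of Schneider--Stuhler's work; granting them, everything else is formal homological algebra over the Noetherian algebra $B$. In fact, once perfectness and $\mathrm{RHom}_B(V,B)\cong D(V^\vee)[-d_s]$ are available one can bypass the dévissage entirely: the natural isomorphism $\mathrm{RHom}_B(V,V')\simeq\mathrm{RHom}_B(V,B)\otimes^{L}_B V'$, valid for perfect $V$, yields on taking cohomology $\Ext^i_B[V,V']\cong\Tor^{B}_{d_s-i}[D(V^\vee),V']$ for all $V'$ at once, with the isomorphism manifestly the cap product with the class corresponding to $\mathrm{id}_V$ under $\Tor^{B}_{d_s}[D(V^\vee),V]\cong\Hom_B(V,V)=\C$.
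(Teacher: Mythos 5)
The statement you are trying to prove is Theorem~1 of the paper, which the authors \emph{cite} from Schneider--Stuhler (\cite{schneider-stuhler:sheaves}, p.~133) without giving a proof; the paper only uses its irreducible-$V'$ special case as input. So there is no proof in the paper against which to compare directly. What can be compared is strategy: the d\'evissage you propose --- base case $V'$ irreducible, then finite length by the five lemma, then finitely generated by $\m$-adic completion along the centre, then arbitrary $V'$ by filtered colimits --- is essentially the strategy the paper itself carries out in Sections~5--8 to prove its main Theorem~2 (the extension of the duality to $\M(G)$ without the central-character hypothesis). In that sense your proposal is correct and closely parallels the paper's own method, just applied to $\Hecke_\chi$ in place of the Bernstein block algebra of $\Hecke(G)$. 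One small difference: the paper never works with $\Tor$ directly in the inductive step; it first converts to an $\Ext$--$\Ext$ pairing via Lemma~\ref{lemma2} and proves perfectness of $\Ext^i_B[V,M]\times\Ext^{n-i}_B[M,DV]\to\C$ (Propositions~\ref{prop5.2}, \ref{prop5.3} and the algebraic duality proposition of Section~6), precisely because the inverse-limit/Mittag--Leffler bookkeeping is cleaner for $\Ext$ in the second variable than for $\Tor$. Your version with $\Tor$ can be made to work since $D(V^\vee)$ is perfect over $B$, but you should be explicit that $\Tor$ against a bounded complex of finitely generated projectives commutes with the relevant inverse limits; $\Tor$ does \emph{not} commute with inverse limits in general.

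Two further cautions. First, you cannot really claim to be \emph{proving} Schneider--Stuhler's theorem: the geometric input you invoke as a ``base case'' (the duality for $V'$ irreducible, plus perfectness of $V$ over $B$ and the identification $\mathrm{RHom}_B(V,B)\cong D(V^\vee)[-d_s]$, i.e.\ $D'(V)\cong D(V^\vee)$) is exactly the substantive content of Schneider--Stuhler's work via coefficient systems on the Bruhat--Tits building; the paper records this identification in Section~2. Granting it, yes, $\mathrm{RHom}_B(V,V')\simeq\mathrm{RHom}_B(V,B)\otimes^L_B V'$ gives the whole theorem in one line --- but that is circular as a proof, since the input already contains the theorem. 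What you have genuinely shown is a clean reduction of the general-$V'$ case to the irreducible-$V'$ case, which is exactly what the paper sets out to do (in the harder $\M(G)$ setting). Second, the reduction to a single block with unital, module-finite-over-centre algebra $B$ is routine for $\Hecke(G)$ via the Bernstein centre, but you should check that the analogous block decomposition and finiteness properties hold for $\Hecke_\chi$ when $Z$ is non-compact --- the paper's whole point is that the passage from $\M(G,\chi)$ to $\M(G)$ is not formal, and the converse passage deserves a sentence of justification.
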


The following well-known lemma  converts $\Tor$ into an $\Ext$ eliminating the need of $\Tor$ in the above theorem which may be more useful in some contexts (such as in `branching laws'). 

\begin{lem}\label{lemma2}
(a)  For any two smooth 
representations $\pi_1,\pi_2$ of a reductive $p$-adic group $G$, there is a 
canonical isomorphism,
$$\Ext^i_G[\pi_1,\pi^\vee_2] \cong \Ext^i_G[\pi_2,\pi^\vee_1] \cong
\Hom_\C[\Tor^G_i[\pi_1,\pi_2], \C] .$$

(b) Similarly, for any two smooth 
representations $\pi_1,\pi_2$ of a reductive $p$-adic group $G$ with a given central character $\chi: Z \rightarrow \C^\times$, there is a 
canonical isomorphism,
$$\Ext^i_{G,\chi}[\pi_1,\pi^\vee_2] \cong \Hom_\C[\Tor^{G,\chi}_i[\pi_1,\pi_2], \C] \cong \Hom_\C[\Tor^{G,\chi}_i[\pi_2,\pi_1], \C].$$
\end{lem}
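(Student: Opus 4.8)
The plan is to reduce everything to a statement about Hecke algebras and then invoke standard homological-algebra dualities. Recall that smooth representations of $G$ are the same as non-degenerate $\Hecke(G)$-modules, and that $\Hecke=\Hecke(G)$ carries a commuting left and right $G$-action. The key structural input is the existence of a non-degenerate trace form: choosing a small enough open compact subgroup $K$ and the associated idempotent $e_K$, one has $e_K\Hecke e_K$ an ordinary unital algebra, and these finite pieces exhaust $\Hecke$; moreover $\Hecke$ is flat as a module over itself on either side (in the non-degenerate sense), so $\Tor^G_i[\pi_1,\pi_2]$ computed via either variable agrees, giving the symmetry $\Tor^G_i[\pi_1,\pi_2]\cong\Tor^G_i[\pi_2,\pi_1]$ for free. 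For part (b) one works instead with $\Hecke_\chi$, which is the quotient of $\Hecke$ by the ideal generated by $z\cdot f - \chi(z)^{-1}f$, and the same formalism applies verbatim.

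First I would establish the elementary duality $\Ext^i_G[\pi_1,\pi_2^\vee]\cong\Hom_\C[\Tor^G_i[\pi_1,\pi_2],\C]$. The smooth dual $\pi_2^\vee$ is, as an $\Hecke$-module, the smooth vectors in $\Hom_\C[\pi_2,\C]$ with the contragredient action; the point is that $\Hom_G[\pi_1,\pi_2^\vee]\cong\Hom_\C[\pi_1\otimes_{\Hecke}\pi_2,\C]$ as a tautological adjunction once one checks that the smooth-vectors functor can be dropped because $\pi_1$ is already smooth (every $G$-map into $\Hom_\C[\pi_2,\C]$ automatically lands in the smooth part when the source is smooth — here one uses that $\pi_1$ is a union of its $e_K$-fixed pieces). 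Then I would derive this functorially: take a projective resolution $P_\bullet\to\pi_1$ by non-degenerate $\Hecke$-modules (projectives exist — each $\Hecke e_K$ is projective and these generate), apply $\Hom_G[-,\pi_2^\vee]$, and identify the resulting complex with $\Hom_\C[P_\bullet\otimes_{\Hecke}\pi_2,\C]$. Since $\Hom_\C[-,\C]$ is exact (we are over a field), cohomology commutes with it, yielding $\Ext^i_G[\pi_1,\pi_2^\vee]\cong\Hom_\C[\Tor^G_i[\pi_1,\pi_2],\C]$. The only subtlety is confirming that $P_\bullet\otimes_{\Hecke}\pi_2$ indeed computes $\Tor^G_i$; this is the definition once one knows $\pi_2$ is non-degenerate and that tensoring over $\Hecke$ in the non-unital sense is the correct derived functor, which is standard Bernstein-school material.

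Next, the symmetry $\Ext^i_G[\pi_1,\pi_2^\vee]\cong\Ext^i_G[\pi_2,\pi_1^\vee]$ follows by combining the displayed identity with the $\Tor$-symmetry $\Tor^G_i[\pi_1,\pi_2]\cong\Tor^G_i[\pi_2,\pi_1]$, the latter being the usual balancing of $\Tor$ (resolve either variable; the two resolutions give quasi-isomorphic double complexes because $\Hecke$ is flat over itself, which for non-degenerate modules amounts to flatness of each $\Hecke e_K$). Alternatively one can see the symmetry directly: $\Ext^i_G[\pi_2,\pi_1^\vee]\cong\Hom_\C[\Tor^G_i[\pi_2,\pi_1],\C]$ by the same argument with the roles exchanged, and then the $\Tor$-symmetry closes the loop. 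Part (b) is identical with $\Hecke$ replaced by $\Hecke_\chi$ throughout; one checks that $\M(G;\chi)$ is equivalent to non-degenerate $\Hecke_\chi$-modules and that the contragredient of a representation with central character $\chi$ has central character $\chi^{-1}$ — but note the $\Tor^{G,\chi}$ groups pair a $\chi$-module against a $\chi$-module via $\Hecke_\chi$, so the bookkeeping is self-consistent precisely because we quotient by the same ideal on both sides.

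I expect the main obstacle to be purely bookkeeping: making the adjunction $\Hom_G[\pi_1,\pi_2^\vee]\cong\Hom_\C[\pi_1\otimes_{\Hecke}\pi_2,\C]$ precise in the non-unital setting, i.e. verifying that the smooth-dual functor interacts correctly with $-\otimes_{\Hecke}-$ and that no completion or smoothing issues arise when passing to a projective resolution. Once the $K$-level truncations are in place — replacing $\Hecke$ by $e_K\Hecke e_K$ and $\pi_j$ by $e_K\pi_j$, where everything becomes a genuine unital-algebra computation — all three isomorphisms are formal, and one takes a colimit over $K$ to recover the statement for $\Hecke$ itself; the colimit is harmless because $\Ext^i$ and $\Tor_i$ both commute with the relevant filtered colimits of finitely generated projective resolutions. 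The analogous truncation for $\Hecke_\chi$ (using $e_K$-fixed vectors, which still makes sense modulo $Z$) handles part (b).
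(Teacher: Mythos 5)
Your argument is correct and tracks the paper's own proof: both pivot on the adjunction $\Hom_G[\pi_1,V^\vee]\cong\Hom_\C[\pi_1\otimes_{\Hecke}V,\C]$ together with the exactness of $\Hom_\C[-,\C]$ over the field $\C$, and both then read off the Ext--Tor duality by comparing a single chain complex with its $\C$-linear dual. The paper chooses to resolve $\pi_2$ projectively and dualize to an injective resolution of $\pi_2^\vee$ before applying $\Hom_G[\pi_1,-]$, whereas you resolve $\pi_1$ directly and apply $\Hom_G[-,\pi_2^\vee]$; this difference is purely cosmetic, and if anything your route avoids needing the auxiliary fact that the contragredient of a projective smooth module is injective.
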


\begin{proof} Let,
$$\cdots \rightarrow P_2 \rightarrow P_1 \rightarrow P_0 \rightarrow \pi_2 \rightarrow 0$$
be a projective resolution of $\pi_2$ in $\M(G)$.
By taking the contragredient, we have an injective resolution of $\pi_2^\vee:$
$$0 \rightarrow \pi_2^\vee \rightarrow P_0^\vee \rightarrow P_1^\vee \rightarrow P_2^\vee \rightarrow \cdots$$

Applying the functor $\Hom_G[\pi_1,-]$ to this exact sequence, and omitting the first term, we get the
cochain complex:
$$\Hom[\pi_1,P_\star] = \left \{ 0 \rightarrow \Hom_G[\pi_1, P_0^\vee]  \rightarrow \Hom_G[\pi_1, P_1^\vee]  \rightarrow \Hom_G[\pi_1, P_2^\vee] \rightarrow \cdots \right \}$$
whose cohomology is by definition $\Ext^i_G[\pi_1, \pi_2^\vee]$.

For any smooth representation $V$ of $G$,
define $$\pi_1 \otimes_{\mathcal H} V = \frac{\pi_1 \otimes V}{\{v_1 \otimes v - hv_1 \otimes h^\vee v | h \in \Hecke, v_1 \in \pi_1,v\in V\}},$$
where $h \rightarrow h^\vee$ is the anti-automorphism of $\Hecke = \Hecke(G)$ given by $h^\vee(g)=h(g^{-1})$. Clearly, we have,
$$\Hom_\C[\pi_1 \otimes_{\mathcal H} V, \C]
=\Hom_{\Hecke}[\pi_1, V^\star]= \Hom_{\Hecke}[\pi_1, V^\vee]=
\Hom_G[\pi_1, V^\vee],$$
where $V^\star$ is the vector space dual of $V$, and $V^\vee$ is the smooth dual
of $V$ (as a $G$-module).

This allows one to re-write the  cochain complex $\Hom[\pi_1,P_\star]$ as:
$$0 \rightarrow \Hom_\C[\pi_1 \otimes_{\mathcal H} P_0, \C] \rightarrow \Hom_\C[\pi_1 
\otimes_{\mathcal H} P_1, \C]  \rightarrow \Hom_\C[\pi_1 \otimes_{\mathcal H} P_2, \C ] \rightarrow \cdots$$

This complex is just the dual of the complex,
$$\cdots  \rightarrow \pi_1 \otimes_{\mathcal H} P_2 \rightarrow \pi_1 
\otimes_{\mathcal H} P_1 \rightarrow \pi_1 \otimes_{\mathcal H} P_0\rightarrow 0,$$
which calculates $\Tor^G_i[\pi_1,\pi_2]$. Since taking cohomology of a complex over $\C$ commutes with taking duals,
this completes the proof of part $(a)$ of the lemma.

Part $(b)$ of the lemma is similarly proved by replacing $\M(G)$ by $\M(G;\chi)$, and $\Hecke$ by $\Hecke_\chi$.
\end{proof}

We will use theorem 1 due to  Schneider-Stuhler to prove the following theorem which is the main result of this paper.

\begin{thm}
  Let $G$ be a reductive $p$-adic group, and $\pi$ an irreducible, admissible representation of $G$. 
Let $d(\pi) $ be the   largest integer $i \geq 0$  such that there is an irreducible, admissible representation 
 $ \pi'$  of $G$  with ​
$\Ext^i_G[\pi,\pi']$  
nonzero​.  Then,

\begin{enumerate}
\item There is a unique irreducible representation $\pi'$ of $G$ with $\Ext^{d(\pi)}_G[\pi,\pi'] \not = 0.$   
\item The representation $\pi'$ in $(1)$ 
is nothing but $D(\pi)$  where $D(\pi)$ is the Aubert-Zelevinsky involution of $\pi$, and $d(\pi)$ is the split rank of the Levi subgroup $M$ of $G$ which carries the cuspidal support of $\pi$. 
\item $\Ext_G^{d(\pi)}[\pi, D(\pi)] \cong \C$. 
\item For any smooth  representation $\pi'$ of $G$, the bilinear pairing
$$(*) \,\,\,\,\,   \Ext^{i}_G[\pi, \pi']  \times \Ext^{j}_G[\pi', D(\pi)]  \rightarrow 
\Ext^{i+j = d(\pi)}_G[\pi, D(\pi)]   \cong \C, $$
is nondegenerate in the sense that if $\pi' = \displaystyle{\lim_{\rightarrow}\, } \pi'_n$ of
finitely generated $G$-sub-modules $\pi'_n$,
then $\Ext^{i}_G[\pi, \pi'] =\displaystyle{\lim_{\rightarrow}\, }
\Ext^{i}_G[\pi, \pi_n']$,
a direct limit of finite dimensional vector spaces over $\C$, and 
$\Ext^{j}_G[\pi', D(\pi)]  = \displaystyle{\lim_{\leftarrow}\, } \Ext^{j}_G[\pi'_n, D(\pi)]  $, an inverse limit
of finite dimensional vector spaces over $\C$, and the pairing  in $(*)$ is the direct limit of
perfect pairings on these finite dimensional spaces:
$$\Ext^{i}_G[\pi, \pi_n']  \times \Ext^{j}_G[\pi'_n, D(\pi)]  \rightarrow 
\Ext^{i+j = d(\pi)}_G[\pi, D(\pi)]   \cong \C.$$
(Observe that a compatible family of perfect pairings on finite dimensional vector spaces
$B_n: V_n \times W_n \rightarrow \C$ with $V_n$ part of an inductive system, and $W_n$ part of a projective system, gives rise to a natural pairing $B :
\displaystyle{\lim_{\rightarrow}\, }V_n \times \displaystyle{\lim_{\leftarrow}\, } W_n \rightarrow \C$
such that the associated homomorphism
from $(\displaystyle{\lim_{\rightarrow}\, }V_n)^\star$ to  $\displaystyle{\lim_{\leftarrow}\, } W_n$ is an isomorphism.
\end{enumerate}
\end{thm}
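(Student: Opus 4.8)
The plan is to localise the entire problem at the Bernstein block $\mathfrak s$ of $\pi$, and then to apply Theorem 1 only in the case where the second argument is also irreducible. Since the Aubert--Zelevinsky involution preserves cuspidal supports, $D(\pi)$ lies in $\mathfrak s$ as well, with the same infinitesimal character as $\pi$. By Bernstein's theory the block $\M^{\mathfrak s}(G)$ is equivalent (after a Morita equivalence) to the category of modules over an associative unital $\C$-algebra $B$ which is finite as a module over its centre $A=Z(\M^{\mathfrak s}(G))$, a finitely generated commutative $\C$-algebra; under this equivalence $\pi$ and $D(\pi)$ become \emph{finite-dimensional} simple $B$-modules on which $A$ acts through one and the same maximal ideal $\m$. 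The functors we care about, $\Ext^i_G[\pi,-]$ and $\Ext^j_G[-,D(\pi)]$, restrict on $\mathfrak s$ to $\Ext^i_B[\pi,-]$ and $\Ext^j_B[-,D(\pi)]$ (and vanish identically off $\mathfrak s$), and the resulting $A$-modules are automatically annihilated by $\m$, because $A$ acts on them through its action on $\pi$, respectively on $D(\pi)$; this single observation is what will force finite-dimensionality at the finitely generated stage and make all the limits behave.

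First I would remove the hypotheses on central characters, but only for $\pi'$ irreducible. Fixing the central character $\chi$ of $\pi$ cuts out inside $\mathfrak s$ the module category of $\bar B=B/(f_1,\dots,f_a)B$, where $f_1,\dots,f_a\in A$ is a regular sequence and $a$ is the split rank of the centre of $G$; the Koszul resolution of $\bar B$ over $B$ then gives, for any two $\bar B$-modules $V,V'$, a natural decomposition
\[ \Ext^i_B[V,V']\;\cong\;\bigoplus_{p+q=i}\Ext^p_{\bar B}[V,V']\otimes_\C\Lambda^q(\C^a), \]
under which the Yoneda product corresponds to the tensor product of the Yoneda product over $\bar B$ with the wedge pairing on $\Lambda^\bullet(\C^a)$. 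Now Theorem 1, combined with Lemma \ref{lemma2}, shows that for $V'$ irreducible the Yoneda product makes $\Ext^p_{\bar B}[\pi,V']$ and $\Ext^{d_s-p}_{\bar B}[V',D(\pi)]$ into perfectly paired finite-dimensional spaces, landing in $\Ext^{d_s}_{\bar B}[\pi,D(\pi)]\cong\C$, where $d_s$ is the Schneider--Stuhler integer for $\pi$. Tensoring with Poincar\'e duality on the exterior algebra $\Lambda^\bullet(\C^a)$ (top degree $a$) yields parts (1)--(3) at a single stroke --- the largest degree involved is $d(\pi)=d_s+a$, which is the split rank of the Levi $M$ carrying the cuspidal support; the unique $\pi'$ attaining it is $D(\pi)$; and $\Ext^{d(\pi)}_G[\pi,D(\pi)]\cong\C$ --- and it yields part (4) for $\pi'$ irreducible, the pairing being a tensor product of two perfect pairings.

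It remains to propagate the perfect pairing of (4) from irreducibles to an arbitrary $\pi'$, in the three d\'evissage steps of the Introduction. For $\pi'$ of finite length in $\mathfrak s$ I would induct on the length, using the long exact sequences of $\Ext^\bullet_B[\pi,-]$ and $\Ext^\bullet_B[-,D(\pi)]$ together with the five lemma; all groups occurring are finite-dimensional. For $\pi'$ finitely generated over $B$ I would exploit that $B$ is Noetherian and of finite cohomological dimension, so $\pi$ and $\pi'$ have finite resolutions by finitely generated projective $B$-modules; since $\pi$ and $D(\pi)$ are $\m$-torsion, both functors factor through the $\m$-adic completion $\widehat{\pi'}=\varprojlim_k \pi'/\m^k\pi'$, and each $\pi'/\m^k\pi'$ is of finite length over $B$ because $B/\m^kB$ is finite-dimensional over $\C$; a Mittag--Leffler argument (the transition maps being surjective) lets the functors commute with this inverse limit, so the pairing on $\pi'$ is the inverse limit of the perfect finite-length pairings, hence perfect, and each $\Ext^i_B[\pi,\pi'_n]$, $\Ext^j_B[\pi'_n,D(\pi)]$ is finite-dimensional, being finitely generated over $A$ and killed by $\m$. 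Finally, for $\pi'=\varinjlim\pi'_n$ with the $\pi'_n$ finitely generated, $\Ext^i_B[\pi,-]$ commutes with the filtered colimit, while the Milnor exact sequence identifies $\Ext^j_B[\pi',D(\pi)]$ with $\varprojlim\Ext^j_B[\pi'_n,D(\pi)]$ --- the $\varprojlim^1$ term vanishing because an inverse system of finite-dimensional vector spaces is automatically Mittag--Leffler --- so the pairing $(*)$ is the direct/inverse limit of the perfect finite-dimensional pairings of the previous step, and the elementary fact recalled in the statement supplies the isomorphism $(\varinjlim\Ext^i_G[\pi,\pi'_n])^\star\cong\varprojlim\Ext^j_G[\pi'_n,D(\pi)]$.

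The step I expect to cost the most is the compatibility in the second paragraph: one must verify that the central-character condition is genuinely cut out by a regular sequence in the Bernstein centre $A$ (so that the Koszul computation is available), and --- more delicately --- that the Schneider--Stuhler cap-product isomorphism is compatible with the resulting Koszul decomposition, so that the $\Ext_G$-pairing really factors as the tensor of the Schneider--Stuhler pairing with Poincar\'e duality on $\Lambda^\bullet(\C^a)$; only then does perfectness of $(*)$ come for free rather than having to be re-established. The remaining work is pure bookkeeping --- keeping the various $\varinjlim$, $\varprojlim$ and $\varprojlim^1$ compatible with the pairing across the last two d\'evissage steps --- and is routine once the finite-dimensionality of all intermediate $\Ext$-groups, which is immediate from $A/\m=\C$, is in hand.
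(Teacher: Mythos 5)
Your proposal follows the same overall architecture as the paper --- Bernstein-block localisation to a pair $(A,B)$ with $B$ module-finite over a central commutative $A$, a K\"unneth-type decomposition to remove the central-character hypothesis, then a three-step d\'evissage (finite length via the five lemma, finitely generated via $\m$-adic completion, arbitrary smooth via colimits) --- and the endgame matches Propositions \ref{prop5.2}, \ref{prop5.3} and the arguments of Sections 6 and 8. The genuine difference is in the K\"unneth step. You propose to realise $\M(G,\chi)$ on the block as modules over $\bar B = B/(f_1,\dots,f_a)B$ and to resolve $\bar B$ by a Koszul complex over $B$, which forces you to verify that the ideal of the central character is generated by a regular sequence in $A$ and remains one on $B$, and that the Yoneda products are compatible with the resulting splitting; you correctly flag these as the expensive verifications. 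The paper's Propositions 3.1 and 3.2 sidestep them: rather than resolving $\bar B$ over $B$, the paper takes a projective resolution $P_*$ of $\pi_1$ in $\M(G,\chi)$, a (Koszul) resolution $Q_*$ of $\C$ over $\C[\Z^d]$ viewed through $\phi\colon G\to\Z^d$, and proves directly (via the projectivity of $\pi\otimes Q_0 = \ind_{G_\phi}^G(\pi|_{G_\phi})$) that $P_*\otimes Q_*$ is a projective resolution in $\M(G)$; the K\"unneth splitting of $\Ext^\bullet_G$ then falls out by first passing to $G^0 = Z\cdot G_\phi$ and taking $G/G^0$-invariants --- including the delicate point, flagged in the paper's proof, that the chain-level isomorphism $\Pi$ holds only over $(G^0,{\mathcal L}^0)$ and not over $(G,\Z^d)$. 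So the paper's route buys you freedom from the regular-sequence hypothesis at the cost of a more hands-on manipulation of the group $G_\phi$. A second, smaller difference: for the passage to arbitrary smooth $\pi'$ you invoke the telescope/Milnor $\varprojlim^1$ sequence, which is fine for $\N$-indexed systems but not obviously so for the general filtered system of finitely generated submodules; the paper instead uses Matlis-dual injective resolutions by artinian modules (Proposition \ref{matlis2}), for which the Mittag--Leffler condition holds for any such filtered system. Both differences are repairable, and your proposal is essentially correct, but the paper's constructions were chosen precisely to avoid the two verifications you identify as potentially costly.
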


The proof of this theorem will be achieved in two steps. We will take the first step in this section
proving the theorem assuming $\pi'$ to be an irreducible representation of $G$ by a minor modification
of the result of Schneider-Stuhler. Having proved the theorem for $\pi'$ irreducible, the rest of the paper will prove part $(4)$ of the theorem
for a general smooth representation $\pi'$ of $G$.

  For $\pi'$ an irreducible representation of $G$,
  we note that if this theorem is true for reductive groups $G=G_1$ and $G=G_2$,
  then it is true for $G=G_1 \times G_2$ by the Kunneth theorem. We will not need to use Kunneth theorem in this paper but for a proof, see    \cite{raghuram}.

  Also, this theorem is clearly true for tori $T$ (where
  the Aubert-Zelevinsky involution is trivial) on noting that $\Ext^i_T[\chi_1,\chi_2] = 0$
  for all $i$ for any two characters $\chi_1,\chi_2:T\rightarrow \C^\times$,
  if $\chi_1 \not = \chi_2$, and that
  $$\Ext^i_T[\chi,\chi] \cong \Ext^i_T[\C,\C]
  = \Lambda^i(\C^d),$$
where $T=T^c \times \Z^d$ with $T^c$, the maximal compact subgroup of $T$.

Let $D          {G} = [\underline{G},\underline{G}]$, the derived subgroup of $\underline{G}$, and $Z$ the center of ${G}$. By the
above remarks, the theorem  is true for the group $G'= DG(F) \times Z$, and
$\pi'$ an irreducible representation of $G'$. Since $G'$ is a normal subgroup of $G$ of finite index, it may
appear that the generality,
  $$\Ext^i_{G}[\pi_1, \pi_2] =  \Ext^i_{G'}[ \pi_1, \pi_2]^{G},$$
will prove the theorem for irreducible representation $\pi$ of $G$ knowing it for irreducible representations $\pi'$ of $G'$. This does not seem to be the case since an irreducible representation of $G$ may decompose when restricted to $G'$, and therefore even to conclude
$$ \Ext_G^{d(\pi)}[\pi, D(\pi)] \cong \C,$$
seems not obvious. (A case in point would be when an irreducible representation $\pi$ of $G$ decomposes as $\pi = 2 \pi'$ when restricted to $G'$. From the information that $\Ext^d_{G'}[\pi',D\pi'] = \C$, and therefore $\Ext^d_{G'}[\pi,D\pi] = \C^4$,
we need to deduce that $\Ext^d_{G}[\pi,D\pi] = \C$, which seems not clear.)

If the above mentioned obstacle to deducing theorem 2 was not there, we would be using
the theorem of Schneider-Stuhler only for semi-simple groups (and only for irreducible representations of them), that of course would have been preferable, but not having succeeded in that direction, we
will use 
the theorem of Schneider-Stuhler for reductive groups (but only for irreducible representations of them) in a slightly different approach.

\begin{prop}
  Fix a surjective map $\phi: G\rightarrow \Z^d$
  whose kernel $G_\phi$ is an open subgroup of $G$, such that
  $G_\phi$ contains  $G^c = (DG)(F) \cdot Z^c(F)$ as a subgroup of finite index, where $Z^c(F)$ is the maximal
  compact subgroup of $Z$. Then if $\pi$ is a finitely generated
  smooth representation of $G$
  with central character
  $\chi: Z \rightarrow \C^\times$,
  and is
  a projective module in $\M(G,\chi)$, 
  $\pi|_{G_\phi}$ is a projective module in 
  $\M(G_\phi)$.

(b)  For the representation $Q_0 = \Sc(\Z^d)= \ind_{\langle e \rangle }^{\Z^d}(\C)
  $ of $\Z^d$, treated as a representation of $G$ via the map $\phi: G\rightarrow \Z^d$, and $\pi$ a smooth representation of $G$ which is 
a projective module in $\M(G,\chi)$,  $\pi \otimes Q_0$
is a smooth representation of $G$ which is a
projective module in $\M(G)$.
  \end{prop}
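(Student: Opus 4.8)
The plan is to prove the two assertions of the proposition by reducing projectivity statements to the existence of suitable adjunctions (Frobenius reciprocity) and exploiting the fact that the relevant groups differ only by a free abelian quotient.

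\smallskip

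\emph{Part (a).} First I would recall the standard fact that in the category $\M(G,\chi)$, a finitely generated representation $\pi$ is projective if and only if $\Ext^i_{G,\chi}[\pi,-]$ vanishes for all $i>0$; equivalently, via the Hecke-algebra description, $\pi$ is projective over $\Hecke_\chi(G)$. The key observation is that $\Hecke_\chi(G)$ is a free module (on both sides) over $\Hecke(G_\phi)$ once we account for $Z$: concretely, $G/G_\phi \cong \Z^d$ is free abelian, $Z \subset G_\phi$, and $G$ is generated by $G_\phi$ together with lifts of the generators of $\Z^d$. One then checks that restriction $\M(G,\chi) \to \M(G_\phi)$ has an exact left adjoint built from $\ind_{G_\phi}^G$ (compact induction), because $G_\phi$ is open of, roughly, finite-plus-free colength; the point is that $\ind_{G_\phi}^G \sigma$, for $\sigma \in \M(G_\phi)$, decomposes $Z$-isotypically and the $\chi$-part is what interacts with $\M(G,\chi)$. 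Since restriction has an exact adjoint on both sides (parabolic/compact induction being exact), restriction preserves projectives: if $P$ is projective in $\M(G,\chi)$ then $\Hom_{G_\phi}[P|_{G_\phi}, -] = \Hom_{G,\chi}[P, (\text{co})\ind(-)]$ is exact. I would be a little careful here: the cleanest route is to show that $\Hecke(G_\phi)$ is a direct summand (as a bimodule) of $\Hecke_\chi(G)$ after the appropriate idempotent truncations, so that $\Hecke_\chi(G)$ is projective as a left $\Hecke(G_\phi)$-module, whence any $\Hecke_\chi(G)$-projective is $\Hecke(G_\phi)$-projective by restriction of scalars along a ring map for which the target is a projective module over the source.

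\smallskip

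\emph{Part (b).} Here the strategy is to observe that $Q_0 = \ind_{\langle e\rangle}^{\Z^d}(\C) = \C[\Z^d]$ is precisely the regular representation of $\Z^d$, i.e.\ it is $\Hecke(\Z^d)$ as a module over itself, hence a free (in particular projective) $\Hecke(\Z^d)$-module. Pulling back via $\phi: G \to \Z^d$ makes $Q_0$ a smooth $G$-representation, and the functor $\pi \mapsto \pi \otimes Q_0$ on $\M(G)$ is exact (tensoring over $\C$ is exact). I would then identify $\M(G)$ with $\M(G,\chi)$-modules induced up over the $\Z^d$-worth of central twists: more precisely, $\pi \otimes Q_0 \cong \ind_{G_\phi}^{G}\big( (\pi|_{G_\phi}) \big)$ up to the central-character bookkeeping, or, dually, for $\pi$ projective in $\M(G,\chi)$ one computes directly that $\Hom_G[\pi\otimes Q_0, -]$ is exact: by a tensor-hom adjunction $\Hom_G[\pi \otimes Q_0, V] \cong \Hom_{G}[\pi, \Hom_{\Z^d}(Q_0, V)]$, and since $Q_0$ is the free $\C[\Z^d]$-module of rank one, $\Hom_{\Z^d}(Q_0, V) \cong V$ as a vector space but with the ``other'' $\Z^d$ action twisted away, which in the end lands one back inside $\M(G,\chi')$ for varying $\chi'$ where projectivity of $\pi$ (transported across the twists) applies. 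Assembling: exactness of $\pi \otimes Q_0 \otimes_{\Hecke}-$ follows from projectivity of $\pi$ in each central-character block together with projectivity of $Q_0$ over $\C[\Z^d]$.

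\smallskip

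\emph{The main obstacle.} The genuinely delicate point is the careful bookkeeping of central characters in part (b): $\M(G)$ does \emph{not} decompose as a product over characters $\chi$ of $Z$ when $Z$ is non-compact (this is exactly the difficulty the whole paper is organized around), so one cannot simply say ``$\pi \otimes Q_0$ is projective blockwise, hence projective.'' The correct formulation is that tensoring with $Q_0$ implements the adjunction between $\M(G_\phi)$ and $\M(G)$ (compact induction along the open subgroup $G_\phi$ with $G/G_\phi$ free abelian), and the real content is checking that this induction functor takes projectives in $\M(G_\phi)$ — which by part (a) is where $\pi|_{G_\phi}$ lives — to projectives in $\M(G)$, together with a matching statement that a projective in $\M(G,\chi)$ restricts to a projective in $\M(G_\phi)$. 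So the plan hinges on proving: (i) restriction $\M(G,\chi) \to \M(G_\phi)$ preserves projectives (part (a)), and (ii) $\ind_{G_\phi}^G$ preserves projectives and $\pi \otimes Q_0 \cong \ind_{G_\phi}^G(\pi|_{G_\phi})$; step (ii), specifically verifying that last isomorphism of $G$-representations compatibly with the $\Hecke$-module structures, is where I expect to spend the most effort.
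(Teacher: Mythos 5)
Your plan for part (b) matches the paper's proof exactly: the key step is the identification $\pi\otimes Q_0 \cong \ind_{G_\phi}^G(\pi|_{G_\phi})$ (since $Q_0 = \ind_{G_\phi}^G(\C)$ and tensoring commutes with compact induction from an open subgroup), after which part (a) together with Frobenius reciprocity $\Hom_G[\ind_{G_\phi}^G E, F] \cong \Hom_{G_\phi}[E,F]$ gives projectivity. Your alternative route via $\Hom_{\Z^d}(Q_0,-)$ is not needed and, as written, doesn't quite make sense as a smooth adjunction; but the main route is correct.

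Part (a) has a genuine gap. You correctly identify that one wants to express $\Hom_{G_\phi}[\pi|_{G_\phi},-]$ as $\Hom_{G,\chi}[\pi, I(-)]$ for some exact functor $I: \M(G_\phi)\to \M(G,\chi)$, but you never construct such an $I$, and in fact the naive candidate $\ind_{G_\phi}^G$ fails on two counts: it lands in $\M(G)$ rather than $\M(G,\chi)$, and since $G_\phi$ has \emph{infinite} index in $G$ (the quotient being $\Z^d$), compact induction and smooth induction differ, so the adjunction runs the wrong way for preserving projectives (you write ``left adjoint'' where a right adjoint is what's needed; the formula you display later is a right-adjoint formula). The paper resolves both problems with a concrete maneuver you do not mention: (i) because $\pi$ is finitely generated as a $G_\phi$-module (here using that $Z\cdot G_\phi$ has \emph{finite} index in $G$), and because $Z\cap G_\phi$ is compact, central in $G_\phi$, and acts on $\pi$ by $\chi$, the space $\Hom_{G_\phi}[\pi, V]$ decomposes over characters of $Z\cap G_\phi$ and only the $\chi|_{Z\cap G_\phi}$-eigenpart contributes, so one may assume $Z\cap G_\phi$ acts on $V$ by $\chi$; (ii) one then extends the $(Z\cap G_\phi)$-action on $V$ to a $Z$-action by $\chi$, making $V$ a $Z\cdot G_\phi$-module, and compactly induces from $Z\cdot G_\phi$ (finite index, so $\ind=\Ind$), which by construction lands in $\M(G,\chi)$; Frobenius reciprocity then gives $\Hom_{G_\phi}[\pi,V]=\Hom_{Z\cdot G_\phi}[\pi,V]=\Hom_G[\pi,\ind_{Z\cdot G_\phi}^G V]$, and projectivity of $\pi$ in $\M(G,\chi)$ applies. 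This eigenspace-decomposition-plus-extension step is the crux and is absent from your sketch. Your ``cleanest route'' (showing $\Hecke_\chi(G)$ is projective over $\Hecke(G_\phi)$ and restricting scalars) is probably viable and in spirit similar, but you would need to actually establish that projectivity, essentially by the same double-coset and compact-central-character analysis, so it is not obviously cleaner.
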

\begin{proof} Since $\pi$ is a finitely generated $G$-module 
with central character
$\chi: Z \rightarrow \C^\times$, and $Z\cdot G_\phi$ is of finite index in $G$, $\pi$ is finitely generated as a $G_\phi$ module.

Since $Z \cap G_\phi$ is a compact abelian group contained in the center of
$G_\phi$, it decomposes 
any smooth representation  of $G_\phi$ into a direct sum of eigenspaces for
$Z\cap G_\phi$:
$$V = \sum_\alpha V_{\alpha},$$
where $V_\alpha$ is the subspace of $V$ on which $Z\cap G_\phi$ acts by the character $\alpha :Z\cap G_\phi \rightarrow \C^\times$.

Since $\pi$ is finitely generated as a $G_\phi$ module, we have:
$$\Hom_{G_\phi}[\pi,V] = \sum_\alpha \Hom_{G_\phi}[\pi,V_{\alpha}].$$

Therefore to prove the projectivity of $\pi$ as a $G_\phi$-module,
it suffices to consider only those surjective homomorphisms
$$\lambda:V_1 \rightarrow V_2 \rightarrow 0,$$
of $G_\phi$-modules on which
$Z \cap G_\phi$ acts by the restriction
of $\chi$ to $Z \cap G_\phi$.

Since $Z \cap G_\phi$ acts by a character which is $\chi|_{Z \cap G_\phi}$
  on both $V_1$ and $V_2$, we can let $Z$ operate
on $V_1$ and $V_2$ by $\chi$, giving a structure 
of $Z \cdot G_\phi$
module to both $V_1$ and $V_2$, making
$\lambda:V_1 \rightarrow V_2 $,  
$Z \cdot G_\phi$-equivariant. By inducing these representations to $G$, we get:
$$\ind(\lambda):\ind_{Z\cdot G_\phi}^G (V_1)
\rightarrow \ind_{Z\cdot G_\phi}^G (V_2) \rightarrow 0.$$

Observe that by Frobenius reciprocity,
$$\Hom_G[\pi, \ind_{Z\cdot G_\phi}^G (V_i)] = \Hom_{Z\cdot G_\phi}[\pi, V_i],$$
for both $i=1,2$. But $Z$ operates on $V_i$ as well as $\pi$ by $\chi$, hence,
$$\Hom_G[\pi, \ind_{Z\cdot G_\phi}^G (V_i)]
= \Hom_{Z\cdot G_\phi}[\pi, V_i]
= \Hom_{ G_\phi}[\pi, V_i].$$

Therefore an element  $\phi_2 \in \Hom_{ G_\phi}[\pi, V_2]$
can be interpreted as an element $\varphi_2 \in \Hom_G[\pi, \ind_{Z\cdot G_\phi}^G (V_2)]$.
Since $\pi$ is a projective module in $\M(G,\chi)$,
and both the representations
$\ind_{Z\cdot G_\phi}^G (V_1)$ and $\ind_{Z\cdot G_\phi}^G (V_2)$ are in 
$\M(G,\chi)$ with a surjection, $\ind(\lambda):\ind_{Z\cdot G_\phi}^G (V_1)
\rightarrow \ind_{Z\cdot G_\phi}^G (V_2) \rightarrow 0,$ $\varphi_2$ can be lifted to $\varphi_1$
in $\Hom_G[\pi, \ind_{Z\cdot G_\phi}^G (V_1)]$, and hence $\phi_2
\in \Hom_{ G_\phi}[\pi, V_2]$
can be lifted  to $\phi_1$ in 
$\Hom_{ G_\phi}[\pi, V_1]$, proving the projectivity of  $\pi$ in $\M(G_\phi)$.

For part $(b)$ of the proposition, note that 
$$\pi \otimes Q_0  = \pi \otimes \ind_{G_\phi}^{G}(\C)
= \ind_{G_\phi}^{G}(\pi|_{G_\phi}). $$

By first part of the proposition, $\pi|_{G_\phi}$ is a projective $G_\phi$-module, hence
the following 
most primitive form of the Frobenius reciprocity in the next Lemma completes the proof of
projectivity of $\pi \otimes Q_0 = \ind_{G_\phi}^{G}(\pi|_{G_\phi})$ as a $G$-module.
\end{proof}

\begin{lem}Let $G_\phi$ be an open subgroup of a $p$-adic group $G$. Let $E$ be a smooth representation of 
$G_\phi$, and $F$ a smooth representation of $G$. Then,
$$\Hom_G[ {\rm ind}_{G_\phi}^G E, F] \cong \Hom_{G_\phi} [E, F|_{G_\phi}].$$
\end{lem}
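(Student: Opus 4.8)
The plan is to prove this by writing down the isomorphism explicitly as the adjunction between compact induction and restriction, using that $G_\phi$ being \emph{open} in $G$ makes $\ind_{G_\phi}^G E$ especially concrete. First I would recall the model: $\ind_{G_\phi}^G E$ consists of functions $f\colon G\to E$ satisfying $f(hg)=h\cdot f(g)$ for $h\in G_\phi$ and supported on finitely many right cosets in $G_\phi\backslash G$ (this finiteness is exactly the ``compact support modulo $G_\phi$'' condition, since $G_\phi\backslash G$ is discrete), with $G$ acting by right translation $(g'\cdot f)(x)=f(xg')$. A preliminary point to dispose of is that $\ind_{G_\phi}^G E$ is again a \emph{smooth} representation of $G$; this is a short routine check using openness of $G_\phi$ together with smoothness of $E$, reducing to finitely many coset values.

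Next I would introduce the two maps that will be shown to be mutually inverse. On one side, let $\eta\colon E\to \ind_{G_\phi}^G E$ send $v\in E$ to the function $f_v$ supported on $G_\phi$ with $f_v(h)=h\cdot v$; one checks $\eta$ is $G_\phi$-equivariant, so composition with $\eta$ gives a map $\Hom_G[\ind_{G_\phi}^G E, F]\to \Hom_{G_\phi}[E, F|_{G_\phi}]$, $\phi\mapsto \phi\circ\eta$. On the other side, given $\psi\in\Hom_{G_\phi}[E, F|_{G_\phi}]$, choose a set of representatives for $G_\phi\backslash G$ and set $\Phi(\psi)(f)=\sum_{g\in G_\phi\backslash G} g^{-1}\cdot\psi\bigl(f(g)\bigr)$. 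Here the sum is finite because $f$ is supported on finitely many cosets, and it is independent of the choice of representatives: replacing $g$ by $hg$ with $h\in G_\phi$ changes the summand by $g^{-1}h^{-1}\cdot\psi(h\cdot f(g))=g^{-1}\cdot\psi(f(g))$ using $G_\phi$-equivariance of $\psi$. A direct substitution $g\mapsto gg'$ in the sum shows $\Phi(\psi)$ is $G$-equivariant, so $\Phi\colon \Hom_{G_\phi}[E,F|_{G_\phi}]\to \Hom_G[\ind_{G_\phi}^G E, F]$.

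Finally I would check the two compositions are the identity. One direction is immediate: $\Phi(\psi)\circ\eta(v)=\Phi(\psi)(f_v)=\psi(f_v(e))=\psi(v)$, since $f_v$ is supported on the coset of the identity. For the other, the key observation is that every $f\in\ind_{G_\phi}^G E$ is a \emph{finite} sum $f=\sum_{g\in G_\phi\backslash G} g^{-1}\cdot f_{f(g)}$, where $g^{-1}\cdot f_{f(g)}$ is the function supported on the coset $G_\phi g$ with value $f(g)$ ``at $g$''; applying a $G$-map $\phi$ and using $\phi\circ\eta=\psi$ term by term then gives $\phi=\Phi(\psi)$. There is no deep obstacle here --- the content is entirely bookkeeping --- and the only points that deserve genuine care are the verification that $\ind_{G_\phi}^G E$ remains smooth and the well-definedness (independence of coset representatives and finiteness) of the sum defining $\Phi$; both are handled by the openness of $G_\phi$.
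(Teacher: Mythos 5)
Your proof is correct. The paper itself offers no proof of this lemma, labelling it ``the most primitive form of the Frobenius reciprocity'' and leaving it to the reader; your explicit construction of the unit $\eta$ and the evaluation/summation map $\Phi$, together with the verification that they are mutually inverse, is exactly the standard argument and is carried out soundly (the well-definedness check under $g\mapsto hg$, the $G$-equivariance of $\Phi(\psi)$ via reindexing, and the decomposition $f=\sum_g g^{-1}\cdot f_{f(g)}$ are all right, and each relies correctly on $G_\phi$ being open so that $G_\phi\backslash G$ is discrete and compact support means finite support).
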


\begin{prop}\label{prop3.2}
  Let $\pi_1$ and $\pi_2$ be two smooth irreducible representations of $G$ with the same central character $\chi: Z\rightarrow \C^\times$. Let
  $\phi:G\rightarrow \Z^d$
  be a surjective homomorphism with kernel $G_\phi$ as before with $G_\phi\cap Z$, the maximal compact subgroup of $Z$. Then
  $$\Ext^k_{G}[\pi_1,\pi_2] \cong \sum_{k=i+j}
  \Ext^i_{G,\chi}[\pi_1,\pi_2] \otimes \Ext^j_{\Z^d}[\C,\C],$$
  where $\Ext^j_{\Z^d}[\C,\C]$ denotes extension of trivial modules of $\Z^d$.
  \end{prop}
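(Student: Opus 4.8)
The plan is to compute $\Ext^\bullet_G[\pi_1,\pi_2]$ from an explicit projective resolution of $\pi_1$ in $\M(G)$ built from two ingredients: a resolution of $\pi_1$ inside the smaller category $\M(G,\chi)$, and the Koszul resolution of the trivial module over $\C[\Z^d]$ pulled back along $\phi$. First I would fix, invoking Bernstein's theory (which guarantees that $\M(G,\chi)$ has enough projectives and is locally Noetherian), a resolution $R_\bullet\to\pi_1$ in $\M(G,\chi)$ in which every $R_p$ is a finitely generated projective object; in particular each $R_p$ is a finitely generated smooth representation of $G$ with central character $\chi$. Let $K_\bullet\to\C$ be the Koszul resolution of the trivial $\C[\Z^d]$-module, so $K_\bullet$ has length $d$ and $K_q=\Lambda^q\C^d\otimes_\C\C[\Z^d]$; since $\C[\Z^d]$ is the representation $Q_0$ of the preceding Proposition, $K_q\cong\Lambda^q\C^d\otimes_\C Q_0$, and we regard each $K_q$ as a smooth representation of $G$ through $\phi$ (the Koszul differential is $\C[\Z^d]$-linear and $G$ acts on $K_q$ by $\C[\Z^d]$-module maps, so the differential is $G$-equivariant). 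Set $P_\bullet:=\mathrm{Tot}(R_\bullet\otimes_\C K_\bullet)$, with $G$ acting diagonally on each $R_p\otimes_\C K_q$. Then $P_\bullet\to\pi_1$ is a projective resolution in $\M(G)$: filtering the double complex by the $R$-degree, each row $R_p\otimes_\C K_\bullet$ has homology $R_p$ concentrated in degree $0$ (a vector space is $\C$-flat), so $P_\bullet$ has homology $\pi_1$ concentrated in degree $0$; and $R_p\otimes_\C K_q\cong\Lambda^q\C^d\otimes_\C(R_p\otimes_\C Q_0)$ is a finite direct sum of copies of $R_p\otimes_\C Q_0$, which is projective in $\M(G)$ by part $(b)$ of the preceding Proposition since $R_p$ is projective in $\M(G,\chi)$.

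One then has $\Ext^k_G[\pi_1,\pi_2]=H^k\bigl(\Hom_G[P_\bullet,\pi_2]\bigr)$, which I would unwind as follows. Using the identity $R_p\otimes_\C Q_0=\ind_{G_\phi}^G(R_p|_{G_\phi})$ recorded in the proof of the preceding Proposition and the Frobenius reciprocity of the preceding Lemma, $\Hom_G[P_\bullet,\pi_2]$ is the total complex of $\Lambda^\bullet(\C^d)^\star\otimes_\C\Hom_{G_\phi}[R_\bullet|_{G_\phi},\pi_2|_{G_\phi}]$, the differential induced by $K_\bullet$ being the Koszul differential for the action of $\Z^d=G/G_\phi$ on $\Hom_{G_\phi}[R_\bullet|_{G_\phi},\pi_2|_{G_\phi}]$ by conjugation by lifts; and by part $(a)$ of the preceding Proposition $R_\bullet|_{G_\phi}$ is a projective resolution of $\pi_1|_{G_\phi}$ in $\M(G_\phi)$, so the $R$-direction of this double complex computes $\Ext^\bullet_{G_\phi}[\pi_1|_{G_\phi},\pi_2|_{G_\phi}]$. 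Two points drive the computation. First, $\phi(Z)\subseteq\Z^d$ acts trivially on each $\Hom_{G_\phi}[R_p|_{G_\phi},\pi_2|_{G_\phi}]$: lifting an element of $\phi(Z)$ to some $z\in Z$ and using that $Z$ acts by $\chi$ on both $R_p$ and $\pi_2$, conjugation by $z$ carries $f$ to $\chi(z)\chi(z)^{-1}f=f$. Second, $\phi(Z)$ has finite index in $\Z^d$, because $Z\cdot(DG)(F)$ has finite index in $G$ and $(DG)(F)\subseteq G_\phi$. Hence $\Hom_{G_\phi}[R_\bullet|_{G_\phi},\pi_2|_{G_\phi}]$ is a complex of modules over the finite group algebra $\C[\Z^d/\phi(Z)]$, and it — and therefore $\Hom_G[P_\bullet,\pi_2]$ — decomposes as a direct sum over the characters $\psi$ of $\Z^d/\phi(Z)$. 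For $\psi\neq1$ the corresponding summand is acyclic, since for a nontrivial character the functor-complex $V\mapsto\bigl(\Lambda^\bullet(\C^d)^\star\otimes_\C V,\,\mathrm{Koszul}_\psi\bigr)$ is a pointwise-acyclic complex of exact functors (it computes $H^\bullet(\Z^d,V\otimes_\C\C_\psi)=0$). For $\psi=1$ the $\Z^d$-action is trivial, the Koszul differential vanishes, and the summand is $\bigoplus_j\Lambda^j(\C^d)^\star\otimes_\C\bigl(\Hom_{G_\phi}[R_\bullet|_{G_\phi},\pi_2|_{G_\phi}]\bigr)^{\Z^d}[-j]$; since passing to $\Z^d$-invariants is exact here (averaging over the finite group $\Z^d/\phi(Z)$) and $\bigl(\Hom_{G_\phi}[R_\bullet|_{G_\phi},\pi_2|_{G_\phi}]\bigr)^{\Z^d}=\Hom_G[R_\bullet,\pi_2]$ computes $\Ext^\bullet_{G,\chi}[\pi_1,\pi_2]$, taking cohomology gives a natural isomorphism $\Ext^k_G[\pi_1,\pi_2]\cong\bigoplus_{i+j=k}\Ext^i_{G,\chi}[\pi_1,\pi_2]\otimes_\C\Lambda^j(\C^d)^\star$, and $\Lambda^j(\C^d)^\star\cong\Ext^j_{\Z^d}[\C,\C]$ is the cohomology of $\Z^d$ with trivial coefficients. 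In the language of spectral sequences, this argument is the degeneration of the Lyndon--Hochschild--Serre spectral sequence for $G_\phi\trianglelefteq G$.

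I expect the main obstacle to be precisely this two-step passage from $\M(G,\chi)$ to $\M(G)$: it is carried out by parts $(a)$ and $(b)$ of the preceding Proposition, and it is essential that $R_\bullet$ be a resolution by \emph{finitely generated} projectives so that the finite-generation hypothesis of part $(a)$ applies — whence the appeal to Bernstein's local Noetherianity of $\M(G,\chi)$. A secondary point needing care is the bookkeeping of the $\Z^d$-equivariance, together with the structural input that $\phi(Z)$ has finite index in $\Z^d$; this finiteness is what makes the residual $\Z^d/\phi(Z)$-action semisimple and thereby forces the double complex (equivalently the Lyndon--Hochschild--Serre spectral sequence) to degenerate.
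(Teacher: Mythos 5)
Your proof is correct, and it uses the same raw materials as the paper's argument — the tensor-product resolution $P_\bullet\otimes Q_\bullet$ of $\pi_1$ in $\M(G)$ provided by the preceding Proposition, Frobenius reciprocity for $\ind_{G_\phi}^G$, and the finiteness of the index of $\phi(Z)$ in $\Z^d$ — but the bookkeeping is genuinely different. The paper introduces the intermediate group $G^0=Z\cdot G_\phi$ and the lattice $\mathcal L^0=\phi(G^0)=\phi(Z)$, establishes a K\"unneth isomorphism of cochain complexes at the level of $(G^0,\mathcal L^0)$ (explicitly flagging that this chain-level isomorphism \emph{fails} when $(G^0,\mathcal L^0)$ is replaced by $(G,\Z^d)$), and only then passes to $G/G^0$-invariants, using that $G/G^0$ acts trivially on $\Ext^\bullet_{\mathcal L^0}[\C,\C]$ and that restriction $\Ext^\bullet_{\Z^d}[\C,\C]\to\Ext^\bullet_{\mathcal L^0}[\C,\C]$ is an isomorphism. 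You bypass the intermediate group: you identify $\Hom_G[P_\bullet\otimes Q_\bullet,\pi_2]$ directly with a Koszul double complex $\Lambda^\bullet(\C^d)^\star\otimes\Hom_{G_\phi}[P_\bullet,\pi_2]$ carrying the conjugation action of $\Z^d=G/G_\phi$, split it into $\psi$-isotypic summands for the finite abelian quotient $\Z^d/\phi(Z)$, note that the $\psi\neq1$ summands are row-acyclic because $H^\bullet(\Z^d,\C_\psi)=0$ for $\psi$ nontrivial, and note that on the $\psi=1$ summand the Koszul differential literally vanishes, collapsing the double complex to $\bigoplus_j\Lambda^j(\C^d)^\star\otimes\Hom_G[P_\bullet,\pi_2]$, which yields the stated K\"unneth decomposition since $\Lambda^j(\C^d)^\star\cong\Ext^j_{\Z^d}[\C,\C]$. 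Beyond being somewhat leaner, your route has the merit of \emph{explaining} the paper's ``small subtlety'' rather than merely noting it: the chain-level K\"unneth map at $(G,\Z^d)$ fails to be an isomorphism exactly because of the presence of the $\psi\neq1$ summands, but these contribute nothing after passing to cohomology. Both arguments ultimately hinge on the same structural input — that $\phi(Z)$ has finite index in $\Z^d$ — which in the paper enters through the isomorphism $\Ext^\bullet_{\Z^d}[\C,\C]\xrightarrow{\sim}\Ext^\bullet_{\mathcal L^0}[\C,\C]$ and in your argument enters through the finiteness (hence semisimplicity) of $\Z^d/\phi(Z)$.
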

\begin{proof}
Let
$$P_* = \cdots \rightarrow P_1 \rightarrow P_0 \rightarrow \pi_1 \rightarrow 0,$$ 
be a projective resolution for $\pi_1$ in $\M(G,\chi)$,
and let $$Q_*= \cdots \rightarrow Q_1 \rightarrow Q_0 \rightarrow \C \rightarrow 0,$$
be a projective resolution
for the trivial module $\C$ for the group $\Z^d$, or for the corresponding $A$-module
for $$A= 
\C[z_1,\cdots,z_d,z_1^{-1},\cdots z_d^{-1} ] =\C[\Z^d] = \Hecke(\Z^d)\cong
\ind_{\langle e \rangle}^{\Z^d} (\C).$$

By the previous proposition,
it follows that the tensor product $P_* \otimes Q_*$: 
$$\cdots \rightarrow P_1\otimes Q_0 + P_0 \otimes Q_1  \rightarrow P_0\otimes Q_0  \rightarrow \pi_1
\rightarrow 0,$$ 
is a projective resolution of $\pi_1$ as a smooth $G$-module, and therefore also a projective resolution of $\pi_1$ as a smooth $G^0$-module where  $G^0 = Z\cdot G_\phi$.
Therefore, 
$\Ext^k_{G^0}[\pi_1, \pi_2]$ is the cohomology of the 
cochain complex  $\Hom_{G^0}[\bigoplus_{i+j = k} P_i\otimes Q_j, \pi_2]$. 

Recall that each $Q_j$ are direct sum of $\Z^d$-modules,
$\Hecke(\Z^d)$,
which considered as a $G$ module via the map $\phi:G\rightarrow \Z^d$
with kernel $G_\phi$, is nothing but 
$\ind_{G_\phi}^{G} (\C)$. As representations of $G^0$, $Q_j=\ind_{G_\phi}^{G^0} (F_j)$ for $F_j$ a finite dimensional vector space over $\C$ on which $G_\phi$ operates trivially.

Towards the calculation of $\Ext^i_{G^0}[\pi_1, \pi_2]$, let us note that
for $F_j$ a finite dimensional vector space over $\C$ on which $G_\phi$ operates trivially,
\begin{eqnarray*}
  \Hom_{G^0}[ P_i\otimes \ind_{G_\phi}^{G^0} (F_j),    \pi_2] & = & 
  \Hom_{G^0}[ \ind_{G_\phi}^{G^0}(P_i|_{G_\phi} \otimes F_j) , \pi_2] \\
  & = & \Hom_{G_\phi}[P_i\otimes F_j , \pi_2]  \\
& = & \Hom_{G_\phi}[P_i , \pi_2] \otimes \Hom_{G_\phi}[F_j,\C] \\
& = & \Hom_{G^0}[P_i , \pi_2] \otimes \Hom_{{\mathcal L}^0} [Q_j,\C],
\end{eqnarray*}  
where ${\mathcal L}^0 = \phi(G^0) \subset \Z^d$, and in the last equality, we have used that $G^0=Z\cdot G_\phi$, and that
$P_i$ and  $\pi_2$ are $G^0$-modules with the same
central character $\chi:Z\rightarrow \C^\times$.

  Summarizing the discussion above, the natural mapping of (tensor product of) the chain complexes:
  $$\Pi: \Hom_{G^0}[P_i, \pi_2 ] \otimes \Hom_{{\mathcal L}^0}[Q_j,\C]
  {\longrightarrow} \Hom_{G^0}[ P_i\otimes Q_j, \pi_2], $$
  is an isomorphism of chain complexes 
    which proves that,
$$\Ext^k_{G^0}[\pi_1,\pi_2] \cong \sum_{k=i+j}
  \Ext^i_{G^0,\chi}[\pi_1,\pi_2 ] \otimes \Ext^j_{{\mathcal L}^0}[\C,\C].$$   

  (A small subtlety in the proof of the proposition lies in the fact that the corresponding homomorphism $\Pi$ of chain complexes with $(G^0,{\mathcal L}^0)$
  replaced by  $(G, \Z^d)$ is not an isomorphism, but still the conclusion about the Ext groups is true.)
  
Observe that all the modules appearing in the isomorphism $\Pi$ above carry
  an action of $G$ (and are projective objects in appropriate categories), and that $\Pi$ is equivariant under the action of $G/G^0$. 
  Since taking cohomology of a complex over $\C$, and
  taking $G/G^0$-invariant, or taking $G/G^0$-invariant of the complex and taking the cohomology is the same, it follows that,
  $$\Ext^k_{G}[\pi_1,\pi_2] \cong \sum_{k=i+j}
  \left [\Ext^i_{G^0,\chi}[\pi_1,\pi_2 ] \otimes  \Ext^j_{{\mathcal L}^0}[\C,\C] \right ]^{G/G^0}.$$
  Now making the crucial observation that the action of $G/G^0$ on
  $\Ext^j_{{\mathcal L}^0}[\C,\C]$ via $\Z^d/{\mathcal L}^0$ is trivial, we deduce
  that:
\begin{eqnarray*}
  \Ext^k_{G}[\pi_1,\pi_2] & \cong & \sum_{k=i+j}
  \left [ \Ext^i_{G^0,\chi}[\pi_1,\pi_2 ] \otimes  \Ext^j_{{\mathcal L}^0}[\C,\C] \right ] ^{G/G^0} \\
  &=& \sum_{k=i+j} \Ext^i_{G^0,\chi}[\pi_1,\pi_2 ]^{G/G^0} \otimes  \Ext^j_{{\mathcal L}^0}[\C,\C] \\
   &=& \sum_{k=i+j} \Ext^i_{G,\chi}[\pi_1,\pi_2 ] \otimes  \Ext^j_{\Z^d}[\C,\C], 
\end{eqnarray*}
where in the last equality we are using that the restriction map
from $\Ext^j_{\Z^d}[\C,\C]$ to $\Ext^j_{{\mathcal L}^0}[\C,\C]$ is an isomorphism,  proving the proposition.
\end{proof}

\begin{remark}
  At this point we have proved theorem 2 for $\pi'$ an irreducible smooth representation of any reductive $p$-adic group $G$ as a consequence of theorem 1 by combining lemma \ref{lemma2}  and Proposition \ref{prop3.2}. The rest of the paper will deduce theorem 2 for {\it any} smooth representation
  $\pi'$ of $G$ from the irreducible case. 
  \end{remark} 

\section{Matlis duality and Injective resolutions}

The results in this paper need  injective resolutions with certain properties
which should certainly be well-known, but not finding a suitable reference, we take the occasion to give  proofs emphasizing that nothing is new.

We begin by recalling the Matlis duality, cf. \cite{Matlis},  first for a commutative algebra $A$, and later
in a non-commutative case needed in this paper. In the commutative case,
Matlis duality is a natural functor from the category of
modules over a (noetherian) local ring $(A,\m)$ to modules over $A$ turning a
noetherian $A$-module  to an artinian $A$-module, and projective $A$-modules to injective $A$-modules.
It is especially easy
to describe for local rings $(A,\m)$ for which $A/\m = k$ is contained in $A$
as is the case in all our applications; we will assume this to be the case, making $A$ a $k$-algebra, and any
$A$-module $M$ as a vector space over $k$. 

Given a module $M$ over a local ring $(A,\m)$,
the Matlis dual of $M$, to be denoted as $M^\vee$, is the
$A$-module $$M^\vee=
\displaystyle{\lim_{\rightarrow}\, }\Hom_k[M/\m^\ell M, k].$$

Note that for an inverse system $\{V_n\}$ of finite dimensional vector spaces $V_n$
over $k$, there is a canonical
isomorphism:
$$  \Hom_k[\displaystyle{\lim_{\rightarrow}\, } \Hom_k[V_n,k], k] \cong
\displaystyle{\lim_{\leftarrow}\, } V_n.$$
It follows that for any finitely generated $A$-module $M$,
there is an isomorphism of $A$-modules,
$$\Hom_k[M^\vee,k] \cong \widehat{M} =\displaystyle{\lim_{\leftarrow}\, } M/\m^\ell M.$$

Since $M\rightarrow \widehat{M}$ is an exact covariant functor from the category
of finitely generated $A$-modules to the category
of $A$-modules,
it follows that
$M\rightarrow M^\vee$ is an exact contravariant functor from the category
of finitely generated $A$-modules to the category
of $A$-modules, taking finitely generated projective $A$-modules to injective $A$-modules,
finitely generated projective $A$-modules to artinian $A$-modules, and
for any $A$-module $M$, $m^\vee$ has the property that
$M^\vee = \cup_{k\geq 1} \Ann(\m^k;M^\vee),$ where for any $A$-module $M$, we let $\Ann(\m^k;M)
= \{m \in M|\m^k m = 0\}$.

    Let $$\rightarrow P_n\rightarrow \cdots \rightarrow P_1\rightarrow P_0\rightarrow N\rightarrow 0,$$
  be a finitely generated
  projective resolution of a finitely generated $A$-module $N$.
  Applying the Matlis duality,
  we get an injective resolution:
$$0 \rightarrow N^\vee \rightarrow  P^\vee_0\rightarrow   P^\vee_1 \rightarrow P^\vee_2 \rightarrow \cdots ,$$
    of $N^\vee$.

    If $N=A/m$, then clearly $N^\vee = A/m$; more generally,
    if $N$ is a finite artinian $A$-module, then $N^\vee$ too is one,
    allowing us to construct injective resolution of artinian $A$-modules. We summarize
    this conclusion in the following proposition.

    \begin{prop} \label{matlis} Let $(A,\m)$ be a local 
      $k=A/\m$ algebra which is finitely generated over $k$.
      Then any artinian $A$-module $N$ killed by a power of $\m$ has an injective resolution:
      $$0 \rightarrow N\rightarrow I_0 \rightarrow  I_1  \rightarrow  I_2 \rightarrow \cdots,$$
      by artinian and injective $A$-modules $I_j$ with $I_j= \cup_{k\geq 1} \Ann(\m^k;I_j)$ for all $j$. 
      \end{prop}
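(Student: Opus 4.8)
The plan is to build the resolution in two stages: first resolve $N$ in the larger category of \emph{all} $A$-modules by injectives, then show those injectives can be replaced by (or already are) $\m$-power torsion artinian injectives. Concretely, the cleanest route is via Matlis duality as set up above. Since $N$ is artinian and killed by some $\m^k$, its Matlis dual $N^\vee$ is again a finitely generated $A$-module: indeed $N^\vee = \Hom_k[N,k]$ (the colimit over $\ell$ stabilizes because $\m^k N = 0$), and $\dim_k N^\vee = \dim_k N < \infty$ as $N$ is artinian over the noetherian local $k$-algebra $A$ — here I use that an artinian module over a finitely generated $k$-algebra local ring, killed by a power of $\m$, is a finite length module hence finite-dimensional over $k$. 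So I would start from a \emph{finitely generated} projective resolution $\cdots \to P_1 \to P_0 \to N^\vee \to 0$, which exists because $A$ is noetherian.

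Next I would apply the exact contravariant functor $M \mapsto M^\vee$ to this projective resolution. By the discussion preceding the proposition, $(-)^\vee$ sends finitely generated projectives to injective $A$-modules, and sends finitely generated $A$-modules to artinian $A$-modules with the property $I = \cup_{k\ge 1}\Ann(\m^k; I)$. Exactness of $(-)^\vee$ (which follows from exactness of $M \mapsto \widehat M$ on finitely generated modules, as noted) turns the projective resolution of $N^\vee$ into an injective resolution
$$0 \to N^{\vee\vee} \to P_0^\vee \to P_1^\vee \to P_2^\vee \to \cdots .$$
Finally I would identify $N^{\vee\vee}$ with $N$: for $N$ of finite length this is the standard Matlis double-duality isomorphism, which here is elementary since $N$ is finite-dimensional over $k$ and $\m$-power torsion, so $N^{\vee\vee} = \Hom_k[\Hom_k[N,k],k] = N$ canonically. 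Setting $I_j = P_j^\vee$ gives exactly the asserted resolution, with each $I_j$ injective, artinian, and equal to $\cup_{k\ge 1}\Ann(\m^k; I_j)$.

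The one point that deserves care — and the main obstacle, such as it is — is the claim that each $I_j = P_j^\vee$ is injective \emph{as an object of the full module category over $A$}, not merely in some restricted subcategory of $\m$-power torsion modules; this is what the statement requires. This is where one uses Matlis's theorem properly: over a complete local noetherian ring the Matlis dual of a projective (even of a free module of possibly infinite rank, if one is careful, but here the $P_j$ are finitely generated so $P_j^\vee$ is a finite direct sum of copies of the injective hull $E(k) = A^\vee$) is injective, and injectivity of $E(k)$ is preserved under passage between $A$ and its completion for modules that are $\m$-power torsion. Since the $P_j$ are \emph{finitely generated} projective, $P_j$ is a direct summand of some $A^{n}$, hence $P_j^\vee$ is a direct summand of $(A^\vee)^n = E(k)^n$, and a direct summand of an injective is injective; so the only thing to invoke is that $A^\vee = E(k)$ is an injective $A$-module, which is exactly Matlis's foundational result cited via \cite{Matlis}. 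I would simply cite this rather than reprove it, consistent with the sentence opening the section that "nothing is new."
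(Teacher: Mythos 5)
Your argument is correct and follows essentially the same route as the paper: write the artinian, $\m$-power-torsion module $N$ as the Matlis dual of a finitely generated module, take a finitely generated projective resolution, and apply the exact contravariant functor $(-)^\vee$ to obtain the desired injective resolution. The only difference is cosmetic — the paper states the duality "finite artinian $\leftrightarrow$ finite artinian" and lets the reader supply the double-duality identification $N^{\vee\vee}\cong N$, while you spell that out and also add a useful justification (via $P_j^\vee$ being a direct summand of $E(k)^n$) for why $P_j^\vee$ is injective in the full module category rather than merely in a torsion subcategory.
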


    \begin{remark} Despite simplicity of ideas in this section, one may note that for $A$ considered as a module over itself, the $A$-module
      $A^\vee = \displaystyle{\lim_{\rightarrow}\, }\Hom_k[A/\m^\ell, k]$
      which is nothing but the injective hull of $k = A/\m$, is not so easy to describe explicitly.
      \end{remark}

    \begin{remark}The Matlis dual defined here is slightly different in appearance from the paper of Matlis who defines it as $\Hom_A[M,E(k)]$
      where $E(k)$ is the injective hull of $k=A/\m$.
      As we are assuming $A$ to contain $k$, $E(k) =\displaystyle{\lim_{\rightarrow}\, }
      \Hom_k[A/\m^\ell, k]$. Therefore for a finitely generated $A$-module $M$, we have:
      $$\Hom_A[M,E(k)] = \displaystyle{\lim_{\rightarrow}\, }\Hom_A[M, \Hom_k[A/\m^\ell, k]] = \displaystyle{\lim_{\rightarrow}\, }\Hom_k[M/\m^\ell M, k],$$
      thus our definition of Matlis duality  and that of Matlis
      are the same for a finitely generated $A$-module $M$.
    \end{remark}

    The Matlis duality discussed above also makes sense in the non-commutative setting of our paper: thus we have a local ring $(A,\m)$, an (associative) algebra $B$ containing $A$ in its center such that $B$ is finitely generated as an $A$-module. We will denote by $B^0$, the {\it opposite} algebra.
    If $M$ is a module over $B^0$, the Matlis dual of $M$, to be denoted as $M^\vee$, is the
$B$-module $$M^\vee=
\displaystyle{\lim_{\rightarrow}\, }\Hom_k[M/\m^\ell M, k].$$

As in the commutative case, $M\rightarrow M^\vee$
is an exact contravariant functor from the category
of finitely generated $B^0$-modules to category
of $B$-modules, taking finitely generated projective $B^0$-modules to injective $B$-modules,
finitely generated projective $B^0$-modules to artinian $B$-modules, and  for any $B$-module $M$, $M^\vee = \cup_{k\geq 1} \Ann(\m^k;M^\vee),$ where for any $B^0$-module $M$, we let $\Ann(\m^k;M)
= \{m \in M|\m^k m = 0\}$.

Further, $N\rightarrow N^\vee$ is a bijective correspondence between
finite artinian $B^0$-modules $N$ and finite artinian $B$-modules $N^\vee$. As before,
taking a projective resolution of a finite $B^0$-module, and applying the Matlis duality,
we have
the following proposition:

\begin{prop} \label{matlis2} Let $(A,\m)$ be a local 
  $k=A/\m$ algebra which is finitely generated over $k$.
      Let $B$ be an algebra containing $A$ in its center such that $B$ is finitely generated 
      as an $A$-module.
      Then any artinian $B$-module $N$ killed by a power of $\m$ has an injective resolution:
      $$0 \rightarrow N\rightarrow I_0 \rightarrow  I_1  \rightarrow  I_2 \rightarrow \cdots,$$
      by artinian and injective $B$-modules $I_j$ with $I_j= \cup_{k\geq 1} \Ann(\m^k;I_j)$ for all $j$. 
      \end{prop}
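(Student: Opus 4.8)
The plan is to reduce the non-commutative statement to the already-established commutative Matlis duality machinery, using only the hypothesis that $B$ is module-finite over the central noetherian local ring $(A,\m)$. First I would observe that since $B$ is finitely generated as an $A$-module and $A$ is noetherian, $B$ is itself left- and right-noetherian, so a finite $B$-module (resp. $B^0$-module) admits a resolution by finitely generated projective $B$-modules (resp. $B^0$-modules); this is the input needed to dualize. Next, given a finite artinian $B$-module $N$ killed by $\m^k$, I would pass to its Matlis dual $N^\vee = \varinjlim \Hom_k[N/\m^\ell N, k]$, which — because $N$ is already $\m^k$-torsion and finite over $A$ — is just $\Hom_k[N,k]$ with its natural $B^0$-module structure (the $B$-action on $N$ dualizes to a $B^0$-action), and this is again a finite artinian $B^0$-module. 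Here the one subtlety to spell out is the bookkeeping of sides: the contravariant functor $M \mapsto M^\vee$ interchanges $B$-modules and $B^0$-modules exactly as stated in the paragraph preceding the proposition, so I would cite that rather than reprove it.

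The heart of the argument is then: take a finitely generated projective resolution of the finite $B^0$-module $N^\vee$,
$$\cdots \rightarrow P_2 \rightarrow P_1 \rightarrow P_0 \rightarrow N^\vee \rightarrow 0,$$
and apply the exact contravariant functor $(-)^\vee$ to obtain
$$0 \rightarrow (N^\vee)^\vee \rightarrow P_0^\vee \rightarrow P_1^\vee \rightarrow P_2^\vee \rightarrow \cdots.$$
Exactness of $(-)^\vee$ on finitely generated $B^0$-modules (which itself follows from exactness of $M \mapsto \widehat{M}$, as in the commutative discussion just above Proposition \ref{matlis}) guarantees this is a resolution; the $P_j^\vee$ are injective $B$-modules because $(-)^\vee$ carries finitely generated projective $B^0$-modules to injective $B$-modules; and each $P_j^\vee$ satisfies $P_j^\vee = \cup_{k\geq 1}\Ann(\m^k; P_j^\vee)$ by the general property of Matlis duals recorded in the non-commutative paragraph. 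Finally, since $N$ is $\m^k$-torsion and finite, the biduality map $N \rightarrow (N^\vee)^\vee$ is an isomorphism — this is the finite-artinian case of $\Hom_k[M^\vee,k]\cong \widehat{M}$ combined with $\widehat{N} = N$ — so the resolution above is an injective resolution of $N$ itself by artinian injective $B$-modules with the stated $\m$-torsion property.

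The main obstacle, such as it is, is not any deep point but rather verifying that the $P_j^\vee$ are genuinely \emph{injective as $B$-modules} and not merely injective relative to some subcategory; this comes down to checking that for a finitely generated projective $B^0$-module $P$, the functor $\Hom_B[-, P^\vee]$ is exact on \emph{all} $B$-modules, which reduces via adjunction $\Hom_B[M, P^\vee] \cong \Hom_k[M \otimes_{B} P, k]$ (the same identity used in Lemma \ref{lemma2}) to exactness of $- \otimes_B P$, i.e. flatness of the projective $B^0$-module $P$ — routine, but worth stating carefully so that the word "injective" is earned over the whole module category rather than just the finite one. Everything else is a transcription of the commutative case, and I would present it as such, emphasizing (as the section's preamble does) that nothing here is new.
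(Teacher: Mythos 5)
Your route is the same as the paper's, which is itself very terse: the paragraph preceding the proposition records that Matlis duality is exact and contravariant, carries finitely generated projective $B^0$-modules to injective $B$-modules, and restricts to a bijection between finite artinian $B^0$-modules and finite artinian $B$-modules, and the proposition is then obtained by ``taking a projective resolution of a finite $B^0$-module and applying Matlis duality.'' Your write-up is the same argument with the intermediate steps spelled out, including biduality $N\cong(N^\vee)^\vee$, which is the content of the bijective-correspondence assertion.

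The one place where you go beyond the paper --- verifying that $P_j^\vee$ is injective on the \emph{whole} category of $B$-modules --- contains a slip. The identity $\Hom_B[M,P^\vee]\cong\Hom_k[P\otimes_B M,k]$ is not correct for an arbitrary $B$-module $M$: the right-hand side is the full $k$-linear dual, while a $B$-map $M\to P^\vee$ is a functional $\psi$ on $P\otimes_B M$ satisfying the extra constraint that for each $m\in M$ some power of $\m$ annihilates $\psi(\,\cdot\,\otimes m)$, which is a genuine restriction unless $M$ is itself $\m$-power torsion. Already for $A=B=P=M$ the left side is $A^\vee=E(k)$, the injective hull of $k$, while the right side is all of $\Hom_k[A,k]$. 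The corresponding identity in Lemma~\ref{lemma2} only holds because $\pi_1$ there is smooth, which forces the image into the smooth dual; you have no such hypothesis on $M$.

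The repair is to replace $k$-linear duality by $A$-linear maps into $E(k)$. Since $P$ is finitely presented over the noetherian ring $A$ and $E(k)=\varinjlim\Hom_k[A/\m^\ell,k]$, one has $\Hom_A[P,E(k)]=\varinjlim\Hom_k[P/\m^\ell P,k]=P^\vee$, and then
$$\Hom_B[M,P^\vee]\;\cong\;\Hom_B\bigl[M,\Hom_A[P,E(k)]\bigr]\;\cong\;\Hom_A[P\otimes_B M,E(k)]$$
is exact in $M$, because $P$ is flat over $B$ (being projective) and $E(k)$ is injective over $A$. This earns the word ``injective'' over all of $B$-Mod, as you wanted; everything else in your proposal is sound and follows the paper.
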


\section{Some generalities on Ext groups}
In this section,
$B$ will be an associative $\C$-algebra with unity containing in its center a finitely generated commutative algebra $A$ with the same unit, such that  $B$ is a finitely generated  $A$-module. The commutative  algebra $A$ over $\C$ comes with a  maximal ideal $\m$.
Let
$\widehat{A} = \displaystyle{ \lim_{\leftarrow}}(A/\m^n)$
be the  completion of $A$ at $\m$, and for any module $M$ over $A$, let $\widehat{M} = \displaystyle{ \lim_{\leftarrow}}(M/\m^nM)$; in particular, $\widehat{B} = \displaystyle{ \lim_{\leftarrow}}(B/\m^n B)$, and for any module $M$ over $B$ too, $\widehat{M} = \displaystyle{ \lim_{\leftarrow}}(M/\m^n M)$.

We begin with a proof of Schur's lemma, well-known in representation theory under a countability assumption (which is satisfied here).

\begin{lem} \label{schur}
A simple  $B$-module $M$  is finite dimensional over $\C$ on which $A$
operates by a central character $\omega:A\rightarrow \C$ whose kernel is a maximal ideal $\m$ in $A$.
 \end{lem}
\begin{proof} Being simple, $M$ is finitely generated over $B$, hence over $A$. Therefore we can apply Nakayama's lemma to conclude that there is a maximal ideal $\m$ in $A$ such that $M/\m M \not = 0$. But $A$ being central in $B$,
  $\m M$ is a $B$-submodule of $M$, hence by simplicity of $M$ as a
  $B$-module, $\m M=0$.
  \end{proof}

\begin{prop} \label{prop5.2} For any finitely generated modules $M,N$ over $B$,
  $$\Ext_B^i[N,M]
  \otimes_A \widehat{A} \cong \Ext_B^i[N,\wM]
  \cong \lim_{\leftarrow}\Ext_B^i[N,M/\m^nM].$$
  Further, if $\m$ acts as 0 on $N$, it also acts by 0 on $\Ext_B^i[N,M]$,
  and hence,$$\Ext_B^i[N,M] \cong \Ext_B^i[N,\wM]
  \cong \lim_{\leftarrow}\Ext_B^i[N,M/\m^nM].$$
  
\end{prop}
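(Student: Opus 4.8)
The plan is to build the isomorphism $\Ext_B^i[N,M]\otimes_A\widehat A\cong\Ext_B^i[N,\widehat M]$ from a finite projective resolution of $N$, exploiting the fact that $B$ is Noetherian (being finitely generated over the Noetherian ring $A$) so that $N$ admits a resolution $P_\bullet\to N$ by finitely generated projective $B$-modules, and each $\Hom_B[P_j,M]$ is then a finitely generated $A$-module. First I would recall that $\widehat A$ is flat over $A$ (since $A$ is Noetherian and $\widehat A$ is its $\m$-adic completion), so tensoring the cochain complex $\Hom_B[P_\bullet,M]$ with $\widehat A$ commutes with taking cohomology; hence $\Ext_B^i[N,M]\otimes_A\widehat A$ is the $i$-th cohomology of $\Hom_B[P_\bullet,M]\otimes_A\widehat A$. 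Next, for each fixed $j$, since $P_j$ is finitely generated projective over $B$ and $M$ is finitely generated, the natural map $\Hom_B[P_j,M]\otimes_A\widehat A\to \Hom_B[P_j,\widehat M]$ is an isomorphism: this is standard for $P_j=B^{\oplus r}$ (both sides become $\widehat M^{\oplus r}$, using that $\widehat M\cong M\otimes_A\widehat A$ for finitely generated $M$ by Artin–Rees/completion), and it passes to direct summands. So the two cochain complexes agree termwise, compatibly with differentials, giving the first isomorphism. For the identification with $\lim_{\leftarrow}\Ext_B^i[N,M/\m^nM]$, I would observe that $\Hom_B[P_j,\widehat M]=\Hom_B[P_j,\lim_\leftarrow M/\m^nM]=\lim_\leftarrow\Hom_B[P_j,M/\m^nM]$ since $\Hom$ out of a fixed module commutes with inverse limits, and then invoke the Mittag–Leffler condition to commute $\lim_\leftarrow$ with cohomology: the towers $\{\Hom_B[P_j,M/\m^nM]\}_n$ have surjective transition maps (as $M/\m^{n+1}M\twoheadrightarrow M/\m^nM$ and $P_j$ is projective), so the $\lim^1$ terms vanish and $H^i(\lim_\leftarrow\Hom_B[P_\bullet,M/\m^nM])\cong\lim_\leftarrow H^i(\Hom_B[P_\bullet,M/\m^nM])=\lim_\leftarrow\Ext_B^i[N,M/\m^nM]$.

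For the second assertion, suppose $\m N=0$. The key point is that the Yoneda action of $A$ (acting through either variable, which agree since $A$ is central in $B$) on $\Ext_B^i[N,M]$ factors through the action on $N$: concretely, for $a\in A$ and $a\in\m$, multiplication by $a$ on $\Ext_B^i[N,M]$ equals the map induced by $a\colon N\to N$, which is the zero map. Hence $\m$ annihilates $\Ext_B^i[N,M]$, so $\Ext_B^i[N,M]$ is already an $A/\m$-module, equivalently an $\widehat A/\m\widehat A = A/\m$-module, and the canonical map $\Ext_B^i[N,M]\to\Ext_B^i[N,M]\otimes_A\widehat A$ is an isomorphism; combining with the first part yields $\Ext_B^i[N,M]\cong\Ext_B^i[N,\widehat M]\cong\lim_\leftarrow\Ext_B^i[N,M/\m^nM]$.

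The main obstacle — really the only subtle point — is the termwise comparison $\Hom_B[P_j,M]\otimes_A\widehat A\cong\Hom_B[P_j,\widehat M]$ together with the compatibility of $\widehat{(\,\cdot\,)}$ with the $B$-module structure: one must be careful that completing $M$ as an $A$-module gives the same thing as completing it $\m$-adically as a $B$-module, which holds because $\m B$ and $\m\cdot M$ control the relevant filtrations ($B$ being module-finite over $A$), and that $\widehat M$ is genuinely a $B$-module on which the $A$-action extends to a $\widehat A$-action. Once flatness of $\widehat A$ over $A$, finite generation of all $\Hom$-modules in sight, and the Mittag–Leffler vanishing of $\lim^1$ are in place, the rest is formal homological bookkeeping with the resolution $P_\bullet$.
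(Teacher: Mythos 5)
Your proposal is correct and follows essentially the same route as the paper: a finitely generated projective resolution of $N$, a termwise comparison $\Hom_B[P_j,M]\otimes_A\widehat A\cong\Hom_B[P_j,\widehat M]$, Mittag--Leffler to pass the inverse limit through cohomology, and then the observation that $\m$ acting by zero on $N$ forces it to act by zero on $\Ext^i_B[N,M]$. The only cosmetic deviations are that you justify Mittag--Leffler by surjectivity of the transition maps $\Hom_B[P_j,M/\m^{n+1}M]\twoheadrightarrow\Hom_B[P_j,M/\m^nM]$ (via projectivity of $P_j$) whereas the paper invokes that these modules are artinian, and you use the Yoneda action of $A$ for the last claim where the paper cites dimension shifting from the $\Hom$ case; both are equally valid.
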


\begin{proof}
  Let $$\rightarrow P_n\rightarrow \cdots \rightarrow P_1\rightarrow P_0\rightarrow N\rightarrow 0,$$
  be a finitely generated
  projective resolution of $N$ as an $B$-module.

  By definition, for any $B$-module $M'$,
  $\Ext_B^i[N,M']$ is the cohomology of the cochain complex:
  $$0 \rightarrow \Hom_B[P_0, M']\rightarrow \Hom_B[P_1,M'] \rightarrow \cdots.$$
  In particular,
$\Ext_B^i[N,M/\m^kM]$ is the cohomology of the cochain complex:
  $$0 \rightarrow \Hom_B[P_0, M/\m^kM]\rightarrow
  \Hom_B[P_1,M/\m^kM]
  \rightarrow \cdots.$$

  Observe that:

  \begin{enumerate}

  \item Since $M$ and $P_i$ are finitely generated $B$-modules, and $B$ is noetherian, the modules $\Hom_B[P_i,M/\m^kM]$ appearing in the above complex are artinian $B$-modules,
    hence the above  projective system 
    of $B$-modules satisfies the Mittag-Leffler condition.
    Cohomology of such a complex commutes with inverse limits.

  \item By the definition of inverse limits, we have the equality
    $$\Hom_B[N',\wM]
    = \lim_{\leftarrow}\Hom_B[N',M/\m^nM],$$
    for any $B$-module $N'$.
  \end{enumerate}
  These two observations complete the proof of the
  assertion: $$\Ext_B^i[N,\wM] \cong \displaystyle{\lim_{\leftarrow}\,}\Ext_B^i[N,M/\m^nM].$$

  Since $\Hom_B[P,M]\otimes _A\widehat{A} \cong \Hom_B[P,\wM]$ (clearly true for finitely generated projective modules $P$ over $B$), and since $M\rightarrow \wM$ is an exact functor on the category of $B$-modules, one similarly proves that $
  \Ext_B^i[N,M] \otimes_A \widehat{A} \cong \Ext_B^i[N,\wM]$.

If   $\m$ acts by 0 on $N$, it also acts by 0 on $\Ext_B^i[N,M']$ for any $B$-module $M'$  (proved for example by observing that it is true
  for $\Hom_B[N,M']$ and then by the `usual' dimension shifting argument).

The proof of the  proposition is now complete.\end{proof}

\begin{prop} \label{prop5.3} Let $V$ be a  simple $B$-module with
  central character $\omega:A\rightarrow \C$ (assured by lemma \ref{schur})
  with $\m = {\rm ker}(\omega)$ the corresponding  maximal ideal in $A$.
Then for  any  finitely generated $B$-module $M$,
  $$\Ext_B^i[M,V]
  \cong \lim_{\rightarrow}
  \Ext_B^i[M/\m^nM,V].$$
\end{prop}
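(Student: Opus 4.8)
The plan is to fix a well-chosen injective resolution of $V$ and observe that applying $\Hom_B(M,-)$ to it produces the filtered colimit over $n$ of the complexes that compute $\Ext_B^i[M/\m^n M,V]$.

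First, by Lemma \ref{schur} the simple module $V$ is finite dimensional and killed by $\m$, so it is in particular an artinian $B$-module annihilated by a power of $\m$. Hence Proposition \ref{matlis2} provides an injective resolution
$$0 \rightarrow V \rightarrow I_0 \rightarrow I_1 \rightarrow I_2 \rightarrow \cdots$$
by injective $B$-modules $I_j$ each of which is $\m$-power torsion, i.e. $I_j = \bigcup_{k\geq 1}\Ann(\m^k;I_j)$. For any $B$-module $N'$ we then have $\Ext_B^i[N',V] = H^i\bigl(\Hom_B(N',I_\bullet)\bigr)$; I apply this with $N' = M$ and with $N' = M/\m^n M$.

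Next, for each $j$ and each $n$, since $\m$ is central in $B$ a $B$-map $M/\m^n M \rightarrow I_j$ is the same thing as a $B$-map $f\colon M\rightarrow I_j$ with $f(\m^n M) = \m^n f(M) = 0$, i.e. with $f(M)\subseteq \Ann(\m^n;I_j)$; under this identification the transition map $\Hom_B(M/\m^n M,I_j)\rightarrow \Hom_B(M/\m^{n+1}M,I_j)$ induced by the surjection $M/\m^{n+1}M\twoheadrightarrow M/\m^n M$ becomes the inclusion $\Hom_B(M,\Ann(\m^n;I_j))\hookrightarrow \Hom_B(M,\Ann(\m^{n+1};I_j))$. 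Passing to the colimit and using that $M$ is finitely generated over $B$, so that $\Hom_B(M,-)$ commutes with the filtered colimit $I_j = \lim_{\rightarrow n}\Ann(\m^n;I_j)$, one gets
$$\lim_{\rightarrow n}\Hom_B(M/\m^n M,I_j) = \Hom_B\bigl(M,\textstyle\bigcup_n \Ann(\m^n;I_j)\bigr) = \Hom_B(M,I_j).$$
Concretely: for any $f\colon M\rightarrow I_j$ the image $f(M)$ is a finitely generated $B$-submodule of $I_j$, hence killed by some $\m^N$, so $f$ already lies in the $n=N$ term. These identifications are compatible with the differentials of $I_\bullet$, so $\lim_{\rightarrow n}\Hom_B(M/\m^n M,I_\bullet)\cong \Hom_B(M,I_\bullet)$ as cochain complexes.

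Finally, filtered colimits of modules are exact and hence commute with taking cohomology of complexes, so
$$\lim_{\rightarrow n}\Ext_B^i[M/\m^n M,V] = \lim_{\rightarrow n}H^i\bigl(\Hom_B(M/\m^n M,I_\bullet)\bigr) = H^i\bigl(\Hom_B(M,I_\bullet)\bigr) = \Ext_B^i[M,V],$$
and by construction this isomorphism is the one induced by the projections $M\twoheadrightarrow M/\m^n M$. The only genuine use of a hypothesis is the finite generation of $M$, needed to commute $\Hom_B(M,-)$ with the colimit $\bigcup_n\Ann(\m^n;I_j) = I_j$; everything else is formal once the $\m$-power-torsion injective resolution furnished by Proposition \ref{matlis2} is in hand, and I would regard producing and exploiting that resolution as the crux of the argument.
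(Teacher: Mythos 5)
Your proof is correct and follows essentially the same route as the paper: take the $\m$-power-torsion injective resolution from Proposition~\ref{matlis2}, use finite generation of $M$ to identify $\Hom_B(M,I_j)$ with $\displaystyle{\lim_{\rightarrow}}\,\Hom_B(M/\m^n M,I_j)$, and conclude because cohomology commutes with filtered colimits. You spell out the identification of $\Hom_B(M/\m^n M,I_j)$ with $\Hom_B(M,\Ann(\m^n;I_j))$ and the compatibility of transition maps more explicitly than the paper does, but the underlying argument is identical.
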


\begin{proof} Fix an injective resolution,
  $$0 \rightarrow V \rightarrow I_0 \rightarrow I_1 \rightarrow \cdots,$$
  such that for each $\ell \geq 0$, $I_\ell = \cup_{k\geq 1} \Ann(\m^k;I_\ell);$ this is assured by Proposition \ref{matlis2}. With this property for $I_\ell$,
  and since $M$ is finitely generated as a $B$-module, we  have:
  \begin{eqnarray*}
    \Hom_B[M,I_\ell] & = & \bigcup \Hom_B[M/\m^kM,I_\ell] \\
    & = & \lim_{\rightarrow}\Hom_B[M/\m^kM,I_\ell].
  \end{eqnarray*}

For any $B$-module $N$,  
  $\Ext_B^i[N, V]$ is, by definition, the cohomology of the cochain complex:
  $$0 \rightarrow \Hom_B[N, I_0]\rightarrow \Hom_B[N,I_1] \rightarrow \cdots.$$

  In particular,
$\Ext_B^i[M/\m^kM, V]$ is the cohomology of the cochain complex:
  $$0 \rightarrow \Hom_B[ M/\m^kM, I_0]\rightarrow \Hom_B[M/\m^kM, I_1] \rightarrow \cdots.$$
  As observed before,
  $      \Hom_B[M,I_\ell]  =  \displaystyle{\lim_{\rightarrow}\, }
  \Hom_B[M/\m^k M,I_\ell]$.
  Since cohomology of a cochain complex commutes with arbitrary direct limits, the proof of the proposition is complete.
\end{proof}

\section{An algebraic duality theorem}
We continue to assume that $B$ is an
associative $\C$-algebra with unity containing in its center a finitely generated commutative algebra $A$ with the same unit, such that  $B$ is a finitely generated  $A$-module.

\begin{prop}
  Let $V$ and $DV$ be two simple $B$-modules with the same central character $\omega:A\rightarrow \C$ with $\m $ the kernel of $\omega$.
  Assume that
  there is an integer $n=n(V)$ such that for any simple $B$-module $N$, the
  natural (cup product) pairing between finite dimensional vector spaces over $\C$:
  $$\Ext^i_B[V,N] \times \Ext^{n-i}_B[N,DV] \rightarrow \Ext^n_B[V,DV] \cong A/\m = \C,$$
  is perfect. Then for any finitely generated $B$-module $M$ also, $\Ext^i_B[V,M]$  and $\Ext^{i}_B[M,DV]$ are
  finite dimensional vector spaces over $\C$, and
  the natural (cup product) pairing between
  finite dimensional vector spaces over $\C$:
  $$\Ext^i_B[V,M] \times \Ext^{n-i}_B[M,DV] \rightarrow \Ext^n_B[V,DV] \cong \C,$$
  is perfect.
\end{prop}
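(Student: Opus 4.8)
The plan is to deduce the statement for a general finitely generated $M$ from the case where $M$ has finite length, and the latter from the hypothesis (the case $M$ simple) by induction on the length; Propositions~\ref{prop5.2} and~\ref{prop5.3} are exactly the tools that make the first reduction possible. First the finite dimensionality assertions. Since $A$ is Noetherian and $B$ is module-finite over $A$, $B$ is Noetherian, so $V$, $DV$ and $M$ all admit resolutions by finitely generated projective $B$-modules. Resolving $V$ by $P_\bullet$, each $\Hom_B[P_j,M]$ is a direct summand of a finite power of $M$, hence a finitely generated $A$-module; as $\m$ annihilates $V$ it annihilates $\Ext^i_B[V,M]$ by Proposition~\ref{prop5.2}, so $\Ext^i_B[V,M]$ is finitely generated over $A/\m=\C$, i.e.\ finite dimensional. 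Resolving $M$ by $Q_\bullet$, each $\Hom_B[Q_j,DV]$ is a direct summand of a finite power of the finite dimensional space $DV$ (Lemma~\ref{schur}), so each $\Ext^i_B[M,DV]$ is finite dimensional.

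For the reduction to finite length, apply Proposition~\ref{prop5.2} to $V$ and Proposition~\ref{prop5.3} to $DV$, both annihilated by $\m$: this gives $\Ext^i_B[V,M]\cong\lim_{\leftarrow}\Ext^i_B[V,M/\m^kM]$ and $\Ext^{n-i}_B[M,DV]\cong\lim_{\rightarrow}\Ext^{n-i}_B[M/\m^kM,DV]$, compatibly with the cup-product pairings, so that the pairing for $M$ is the limit of those for the $M/\m^kM$. Since $A/\m^k$ is a finite dimensional local $\C$-algebra and $B/\m^kB$ is module-finite over it, each $M/\m^kM$ is finite dimensional over $\C$, hence a $B$-module of finite length. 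Granting the finite-length case, each of these pairings is perfect, and then perfectness of the limit pairing follows from the elementary observation on limits of compatible perfect pairings recorded in part~(4) of Theorem~2, since the inverse system $\{\Ext^i_B[V,M/\m^kM]\}$ has artinian terms (so satisfies Mittag--Leffler) and its limit $\Ext^i_B[V,M]$ is finite dimensional by the previous paragraph.

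It remains to prove the proposition when $M$ has finite length, which we do by induction on the length, using that every composition factor of such an $M$ (being simple and killed by a power of $\m$) is a simple $B$-module with central character $\omega$, so that the hypothesis applies to it. The length-one case is precisely the hypothesis. For the inductive step choose a short exact sequence $0\to M'\to M\to N\to 0$ with $N$ simple. Applying $\Ext^\bullet_B[V,-]$ and the $\C$-linear duals of the long exact sequences for $\Ext^\bullet_B[-,DV]$ produces a ladder whose vertical arrows are the cup-product maps $\Ext^i_B[V,-]\to\Hom_\C[\Ext^{n-i}_B[-,DV],\C]$; by the inductive hypothesis the verticals at $M'$ and at $N$ are isomorphisms in every degree, so the five lemma forces the verticals at $M$ to be isomorphisms, which --- all spaces being finite dimensional --- says exactly that the pairing for $M$ is perfect. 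This completes the induction, and hence the proof modulo the commutativity of the ladder.

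The one step demanding genuine care is exactly that commutativity, namely the compatibility of the cup product with the connecting homomorphisms of the two long exact sequences attached to $0\to M'\to M\to N\to 0$. The way I would settle it is to identify the cup product with Yoneda composition $\Ext^i_B[V,M]\otimes\Ext^{n-i}_B[M,DV]\to\Ext^n_B[V,DV]$ and to identify both boundary maps with Yoneda multiplication by the extension class $e\in\Ext^1_B[N,M']$ of the sequence; the required identity $\langle\delta\alpha,\beta\rangle=\pm\langle\alpha,\partial\beta\rangle$ is then the associativity $\beta\circ(e\circ\alpha)=(\beta\circ e)\circ\alpha$ of Yoneda products, up to the usual Koszul sign. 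The squares not involving boundary maps commute by plain functoriality of the Yoneda pairing, and with this the proof is complete.
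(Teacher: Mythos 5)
Your proof is correct and follows essentially the same route as the paper's: establish finite dimensionality, invoke Propositions~\ref{prop5.2} and~\ref{prop5.3} to express the pairing for $M$ as a limit of the pairings for the finite-length modules $M/\m^k M$, and handle the finite-length case by induction on length via the five lemma and the compatibility $\delta_1^i(\alpha)\cup\beta=\alpha\cup\delta^{n-i}(\beta)$ of cup products with connecting maps. The paper presents these as Step~1 (stability of perfectness under extensions) followed by Step~2 (passage to the limit over $M/\m^kM$), and merely asserts the cup-boundary naturality, whereas you supply the Yoneda-product justification; that is a correct and useful fill-in but does not change the strategy.
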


\begin{proof} By generalities,
  $\Ext^i_B[V,M]$ and  $\Ext^{j}_B[M,DV]$
  are finitely generated $A$-modules on which
  $\m$ acts trivially
  (proved for example by observing that it is true
  for $\Hom_B[V,M]$ and  $\Hom_B[M,DV]$, and then by the `usual' dimension shifting argument).
  Hence, $\Ext^i_B[V,M]$ and  $\Ext^{j}_B[M,DV]$ are finite dimensional $\C$-vector spaces. Now we proceed in two steps.

  {\bf Step 1:} We will prove that if the natural pairing 
  $$\Ext^i_B[V,M] \times \Ext^{n-i}_B[M,DV]
  \rightarrow \Ext^n_B[V,DV] \cong \C,$$
  is perfect for $B$-modules $M_1$ and $M_2$, it is also the case for
  any module $M$ which is an extension of $M_2$ by $M_1$:
  $$0\rightarrow M_1 \rightarrow M \rightarrow M_2 \rightarrow 0.$$

  The proof of this follows from 5-lemma once we observe that the cup product appearing in the statement of the proposition has a naturality property
  under the boundary maps, call them $\delta_1^i:\Ext^{i-1}_B[V,M_2]
  \rightarrow  \Ext^{i}_B[V,M_1]$,
  and $\delta_2^{n-i}:\Ext^{n-i}_B[M_1,DV]
  \rightarrow \Ext^{n+1-i}_B[M_2,DV]$. Then
  for $\alpha \in \Ext^{i-1}_B[V,M_2]$ and $\beta \in \Ext^{n-i}_B[M_1,DV]$,
  
  $$\delta_1^i(\alpha) \cup \beta = \alpha \cup \delta^{n-i}(\beta) \in \Ext^n_B[V,DV] \cong \C.$$

  {\bf Step 2:} Given the conclusion in Step 1, for each integer $k\geq 1$, we have a perfect pairing,
  $$\Ext^i_B[V,M/\m^k M] \times \Ext^{n-i}_B[M/\m^k M,DV] \rightarrow \Ext^n_B[V,DV] \cong \C,$$

  By proposition \ref{prop5.2},
  $$\Ext^i_B[V,M] =  \lim_{\leftarrow} \Ext^{i}_B[V,M/\m^k M],$$
  and   by proposition \ref{prop5.3},
  $$\Ext^{n-i}_B[M,DV] = \lim_{\rightarrow} \Ext^{n-i}_B[M/\m^k M,DV].$$
  The conclusion of the proposition follows. \end{proof}

\section{Back to the duality theorem for $p$-adic groups}
Here is the main theorem of this paper  proved as a consequence of results from earlier sections.   Crucial use is of course being made of the theorem of Schneider-Stuhler which allows the conclusion of the theorem for $M$
  irreducible.

\begin{theorem} For $F$ a non-archimedean local field,
  let $G=\underline{G}(F)$ be the $F$-rational points of a reductive algebraic group $\underline{G}$. Let $V$ be an irreducible admissible representation of $G$, with Aubert-Zelevinsky involution $DV$. 
  Then for any finitely generated $G$-module $M$,
  the following is a perfect pairing of  finite dimensional vector spaces over $\C$:
  $$(7.1.1) \,\,\,\,\, \Ext^i_B[V,M] \times \Ext^{n-i}_B[M,DV] \rightarrow \Ext^n_B[V,DV] \cong \C.$$
  If $M$ is any smooth representation of $G$, then the pairing
  is nondegenerate in the sense that if $M = \displaystyle{\lim_{\rightarrow}\, } M_n$ of
finitely generated $G$-sub-modules $M_n$,
then $\Ext^{i}_G[V, M] =\displaystyle{\lim_{\rightarrow}\, } \Ext^{i}_G[V, M_n]$,
a direct limit of finite dimensional vector spaces over $\C$, and similarly,
$\Ext^{j}_G[M, DV]  = \displaystyle{\lim_{\leftarrow}\, } \Ext^{j}_G[M_n, DV]  $, an inverse limit
of finite dimensional vector spaces over $\C$, and the pairing  in $(7.1.1)$ is the direct limit of
perfect pairings on these finite dimensional spaces:
$$\Ext^{i}_G[V, M_n]  \times \Ext^{j}_G[M_n, DV]  \rightarrow 
\Ext^{i+j = d(\pi)}_G[V, DV]   \cong \C.$$
\end{theorem}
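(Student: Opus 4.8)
The plan is to reduce the statement almost entirely to the algebraic framework already set up in Section~6. The key point is that, by Bernstein's theory of the Bernstein centre, the category of smooth representations of $G$ decomposes into a product of blocks, each of which is equivalent to the category of (non-degenerate) modules over an algebra $B$ of the type considered in Section~6: an associative $\C$-algebra containing in its centre a finitely generated commutative algebra $A$, with $B$ finitely generated over $A$. (Strictly, each block is a full module category over such a $B$; for a fixed irreducible $V$ only the block containing $V$ matters, and $DV$ lies in the same block since it has the same cuspidal support, and likewise one may replace an arbitrary finitely generated $M$ by its projection to that block without changing any of the Ext groups in $(7.1.1)$.) Once $G$-modules are replaced by $B$-modules in this way, Theorem~7.1 for $M$ finitely generated is \emph{exactly} the Proposition of Section~6, \emph{provided} its hypothesis holds: that for every simple $B$-module $N$ the cup-product pairing $\Ext^i_B[V,N]\times\Ext^{n-i}_B[N,DV]\to\Ext^n_B[V,DV]\cong\C$ is perfect, with $n=d(V)$ the split rank of the Levi carrying the cuspidal support of $V$ and $DV$ the Aubert--Zelevinsky involution of $V$.

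So the first real step is to verify that hypothesis. A simple $B$-module is, via the equivalence, an irreducible smooth representation $N$ of $G$ lying in the block of $V$; in particular $N$ is admissible and has the same (central character on $A$, hence) cuspidal support type. This is precisely the irreducible case of Theorem~2 of Section~3, which has already been established there as a consequence of the Schneider--Stuhler Theorem~1 together with Lemma~\ref{lemma2} and Proposition~\ref{prop3.2}: parts~(1)--(3) of Theorem~2 identify $d(V)$ and show $\Ext^{d(V)}_G[V,DV]\cong\C$ with all other top-degree Ext's against irreducibles vanishing, and the perfectness of the cup product against irreducible $N$ is the content of the duality isomorphism in that degree (the pairing is the one induced, via Lemma~\ref{lemma2}, from the cap-product isomorphism $\Ext^i_{G,\chi}[V,N]\cong\Tor^{\Hecke_\chi}_{d_s-i}[D(V^\vee),N]$, after the twisting by $\Ext^\bullet_{\Z^d}[\C,\C]$ of Proposition~\ref{prop3.2} is accounted for). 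Feeding this into the Proposition of Section~6 gives $(7.1.1)$ for all finitely generated $M$, along with the finite-dimensionality of both Ext spaces.

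The second step is to pass from finitely generated $M$ to arbitrary smooth $M$ by a colimit argument. Write $M=\varinjlim M_n$ over its finitely generated $G$-submodules. Since $V$ is finitely generated (indeed admissible), $\Ext^i_G[V,-]$ commutes with filtered colimits, so $\Ext^i_G[V,M]=\varinjlim\Ext^i_G[V,M_n]$, a filtered colimit of finite-dimensional spaces. For the other variable one works in the block algebra $B$: the resolution computing $\Ext^\bullet_B[-,DV]$ may be taken to be an injective resolution of $DV$ by artinian injectives annihilated by a power of $\m$ (Proposition~\ref{matlis2}), and then as in the proof of Proposition~\ref{prop5.3} the functor $\Ext^{j}_B[-,DV]$ turns the filtered colimit $M=\varinjlim M_n$ into the inverse limit $\varprojlim\Ext^{j}_B[M_n,DV]$ of finite-dimensional spaces. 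Finally, the cup products on the $M_n$ are compatible with the transition maps (the naturality of the cup product under functoriality in the middle variable, the same compatibility used in Step~1 of the Section~6 proof), so they assemble, by the elementary fact recorded parenthetically in the statement of Theorem~2, into a pairing $\varinjlim\Ext^i_G[V,M_n]\times\varprojlim\Ext^{j}_G[M_n,DV]\to\C$ inducing an isomorphism $(\varinjlim\Ext^i_G[V,M_n])^{\star}\xrightarrow{\ \sim\ }\varprojlim\Ext^{j}_G[M_n,DV]$; this is the asserted nondegeneracy.

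The main obstacle is entirely in the reduction at the start: one must be careful that the Bernstein block really is a module category over an algebra $B$ satisfying the exact finiteness hypotheses of Section~6 (finitely generated over a finitely generated commutative centre $A$), and that the Ext and Tor groups computed in $\M(G)$ agree with those computed over $B$ after projecting to the block — including the point that $DV$ and the relevant part of $M$ live in that block. Everything after that is a formal consequence of the already-proved irreducible case of Theorem~2 and the purely algebraic Proposition of Section~6 together with the colimit bookkeeping of Section~5.
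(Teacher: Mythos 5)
Your proposal is correct and follows essentially the same route as the paper's own proof: reduce via Bernstein's theory to a module category over an algebra $B$ finitely generated over a finitely generated central $A$, feed the irreducible case of Theorem~2 into the algebraic duality proposition of Section~6 to handle finitely generated $M$, and then pass to arbitrary smooth $M$ via the two colimit/limit lemmas of Section~8. The only difference is one of packaging --- the paper works with the direct-summand subcategory of representations generated by their $G_n$-fixed vectors and takes $B=\Hecke(G_n\backslash G/G_n)$ with the identification $\Ext^i_G[V,M]=\Ext^i_B[V_B,M_B]$, whereas you phrase the same reduction in terms of the Bernstein block containing $V$ --- but these are interchangeable here.
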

\begin{proof}
  Since $V$ is an irreducible admissible representation of $G$, $V^{G_n} \not = 0$ where $G_n$ is a sufficiently
  small congruence subgroup of $G$ (obtained say by using an embedding of $G=\underline{G}(F)$ inside $\GL_m(F)$,
  and intersecting with a congruence subgroup of $\GL_m(F)$).

  We now appeal to some generalities due to Bernstein, which goes in the theory of `Bernstein center',
  according to which smooth representations of $G$ which are generated by their $G_n$ fixed vectors is a
  direct summand of the category of all smooth representations of $G$, and this subcategory
  of smooth representations of $G$ is isomorphic to the category of $B$-modules for
  $B=\Hecke(G_n\backslash G/G_n)$. 
  Further, $B$ is a finite module over its center $A$ which is a finitely generated
  $\C$-algebra, cf. Corollary 3.4 in \cite{center}, as well as the notes of Bernstein \cite{lectures}.
  
 This  allows us to use theorems of the previous sections by identifying,
  $$\Ext^i_G[V,M] = \Ext^i_B[V_B,M_B],$$
  where $M_B$ (resp. $V_B$) is the submodule of $M$ (resp. $V$) consisting of $G_n$-fixed vectors which is a module for
  $B=\Hecke(G_n\backslash G/G_n)$.
  It is well-known that if $M$ is finitely generated as a $G$-module, then so is the $B$-module $M_B$.

The case of $M$ a general smooth representation of $G$ will be taken up in the next section.
\end{proof}

\begin{remark}
  It is a theorem of Aizenbud and Sayag in \cite{sayag}
  that for a $p$-adic group $G$, containing a subgroup $H$, if one knows that $\dim \Hom_H[\pi_1,\pi_2] < \infty$ for $\pi_1$ any irreducible admissible representation of $G$, and $\pi_2$ of $H$, and if $[G \times H]/\Delta H$ is a
  spherical variety, then for any compact open subgroup $K$ of $H$, and any irreducible representation $\pi_1$ of $G$, $\pi_1^K$ is a finitely generated module
  over $\Hecke(K\backslash H/K)$. Thus taking $G$ to be $\GL(n+1),\U(n+1),\SO(n+1)$
  and $H$ to be the corresponding subgroup $\GL(n),\U(n),\SO(n)$, we get a rich supply of finitely generated representations of Hecke algebras that Aizenbud and Sayag  call {\it locally finitely generated}.
  \end{remark}
\section{General smooth representations}

Since the pairing
  $$\Ext^i_B[V,M] \times \Ext^{n-i}_B[M,DV] \rightarrow \Ext^n_B[V,DV] \cong \C,$$
has a functorial structure for $B$-modules $M$, to understand it for
an arbitrary $B$-module $M$,  the following
two simple lemmas suffice.

\begin{lem} Let $V$ be a finitely generated $B$-module, and $M$ a general $B$-module.
  Write $M = \displaystyle{\lim_{\rightarrow}M_n}$ of finitely generated $B$-submodules.
  Then we have,
    $$\Ext_B^i[V,M] =
  \displaystyle{\lim_{\rightarrow}} \Ext_B^i[V,M_n] .$$
\end{lem}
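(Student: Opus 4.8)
The plan is to verify the statement by combining two standard facts: that $\Ext^i_B[V,-]$ commutes with filtered colimits when $V$ admits a resolution by finitely generated projective $B$-modules, and that $V$ does admit such a resolution because $B$ is noetherian and $V$ is a finitely generated $B$-module (indeed $V$ is finite-dimensional over $\C$ by Lemma~\ref{schur}). So the first step is to fix a resolution
$$\cdots \to P_1 \to P_0 \to V \to 0$$
with each $P_j$ a finitely generated projective $B$-module; this exists because $B$ is left-noetherian (being a finite module over the noetherian ring $A$), so syzygies of finitely generated modules stay finitely generated, and over a noetherian ring finitely generated projectives suffice to resolve.

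Next I would compute both sides using this resolution. By definition $\Ext^i_B[V,M]$ is the $i$-th cohomology of the cochain complex $\Hom_B[P_\bullet, M]$, and similarly $\Ext^i_B[V,M_n]$ is the cohomology of $\Hom_B[P_\bullet, M_n]$. The key point is that for a \emph{finitely generated} $B$-module $P$, the natural map
$$\displaystyle{\lim_{\rightarrow}} \Hom_B[P, M_n] \;\longrightarrow\; \Hom_B[P, \displaystyle{\lim_{\rightarrow}} M_n] = \Hom_B[P,M]$$
is an isomorphism: surjectivity because a homomorphism $P \to M$ has image generated by finitely many elements, each of which already lies in some $M_n$, hence (as the $M_n$ form a directed system) in a common $M_N$; injectivity because if $P \to M_n$ becomes zero after composing into $M$, then each of the finitely many generators of $P$ maps to an element of $M_n$ that dies in $M$, hence dies already in some $M_{n'} \supseteq M_n$. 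Applying this to each $P_j$ identifies the directed colimit of the complexes $\Hom_B[P_\bullet, M_n]$ with the complex $\Hom_B[P_\bullet, M]$.

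Finally I would invoke the exactness of filtered colimits of abelian groups (equivalently, of $\C$-vector spaces): cohomology commutes with filtered colimits because a filtered colimit of exact sequences is exact, so $H^i$ of a filtered colimit of complexes is the filtered colimit of the $H^i$'s. Combining this with the previous step gives
$$\Ext^i_B[V,M] = H^i\bigl(\Hom_B[P_\bullet,M]\bigr) = H^i\Bigl(\displaystyle{\lim_{\rightarrow}}\Hom_B[P_\bullet,M_n]\Bigr) = \displaystyle{\lim_{\rightarrow}} H^i\bigl(\Hom_B[P_\bullet,M_n]\bigr) = \displaystyle{\lim_{\rightarrow}}\Ext^i_B[V,M_n],$$
which is the assertion.

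There is really no serious obstacle here; the only point requiring any care is the commutation of $\Hom_B[P_j,-]$ with the directed colimit, and that rests entirely on the finite generation of $P_j$ — which in turn rests on $B$ being noetherian, a fact already recorded in the excerpt. One should also note that the statement is phrased with $M = \varinjlim M_n$ over finitely generated $B$-submodules, and that this is a directed (indeed filtered) system under inclusion with union $M$, so all the colimits above are genuinely filtered and the exactness used is legitimate.
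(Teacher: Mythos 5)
Your proof is correct and follows the paper's argument exactly: take a projective resolution of $V$ by finitely generated $B$-modules, pass the filtered colimit through $\Hom_B[P_j,-]$ using finite generation of each $P_j$, and conclude because cohomology commutes with filtered colimits. One incidental slip: the parenthetical appeal to Lemma~\ref{schur} to conclude $V$ is finite-dimensional over $\C$ does not apply here, since $V$ is only assumed finitely generated rather than simple — but your argument never actually uses this, relying correctly on $B$ being noetherian instead.
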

\begin{proof}
  Let $$\rightarrow P_n\rightarrow \cdots \rightarrow P_1\rightarrow P_0\rightarrow
  V\rightarrow 0,$$
  be a projective resolution of $V$ as a $B$-module consisting of finitely generated $B$-modules.

  By definition,
  $\Ext_B^i[V,M]$ is the cohomology of the cochain complex:
  $$0 \rightarrow \Hom_B[P_0, M]\rightarrow \Hom_B[P_1,M] \rightarrow \cdots.$$
  Since $P_i$ are finitely generated as  $B$-modules,
  $$\Hom_B[P_i,M] =
\displaystyle{\lim_{\rightarrow}} \Hom_B[P_i,M_n] .$$
The Lemma  now follows on  noting that cohomology commutes with direct limits.
  \end{proof}
  
Analogously, we have:

\begin{lem} Let $V$ be an irreducible $B$-module,
  and $M$ a general $B$-module.
  Write $M = \displaystyle{\lim_{\rightarrow}M_n}$ of finitely generated $B$-submodules.
  Then we have, $$\Ext_B^i[M,V] =
  \displaystyle{\lim_{\leftarrow}} \Ext_B^i[M_n,V].$$
  
\end{lem}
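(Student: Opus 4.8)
The plan is to mirror the argument of the previous lemma, but now working with an injective-type resolution on the second variable rather than a projective resolution on the first. Since $V$ is irreducible as a $B$-module, Lemma \ref{schur} gives a central character $\omega:A\to\C$ with kernel a maximal ideal $\m$, and $\m$ annihilates $V$; moreover by Proposition \ref{matlis2} one can fix an injective resolution $0\to V\to I_0\to I_1\to\cdots$ of $V$ by artinian $B$-modules $I_\ell$ with $I_\ell=\bigcup_{k\ge 1}\Ann(\m^k;I_\ell)$. The idea is that $\Ext_B^i[M,V]$ is computed as the cohomology of the cochain complex $0\to\Hom_B[M,I_0]\to\Hom_B[M,I_1]\to\cdots$, so it suffices to show that each term of this complex converts the direct limit $M=\varinjlim M_n$ into an inverse limit, i.e. $\Hom_B[M,I_\ell]=\varprojlim\Hom_B[M_n,I_\ell]$, and that the inverse system so obtained is Mittag-Leffler so that cohomology commutes with $\varprojlim$.

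First I would record the purely formal fact that for any module $J$, $\Hom_B[\varinjlim M_n, J]=\varprojlim\Hom_B[M_n,J]$; this is just the universal property of the colimit and needs no finiteness. Next, the key point is to see that the relevant inverse system satisfies Mittag-Leffler. Here I would use that $M_n$ is finitely generated over $B$, that $\m$ acts trivially on $V$ hence (by the usual dimension-shifting argument, exactly as invoked in Propositions \ref{prop5.2} and \ref{prop5.3}) on each $\Ext_B^i[M_n,V]$, and that $\Ext_B^i[M_n,V]$ is then a finitely generated module over $A/\m=\C$, i.e. finite dimensional. A decreasing (or at least eventually stabilizing) cofinal behaviour of finite dimensional vector spaces is automatically Mittag-Leffler, so $\varprojlim^1$ vanishes and cohomology commutes with the inverse limit. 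Combining these gives
$$\Ext_B^i[M,V]=H^i\big(\Hom_B[\varinjlim M_n,I_\bullet]\big)=H^i\big(\varprojlim\Hom_B[M_n,I_\bullet]\big)=\varprojlim H^i\big(\Hom_B[M_n,I_\bullet]\big)=\varprojlim\Ext_B^i[M_n,V].$$

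The main obstacle I anticipate is justifying the interchange of cohomology with the inverse limit cleanly: unlike the direct-limit case (where exactness of filtered colimits makes the interchange automatic), for inverse limits one genuinely needs a Mittag-Leffler condition, so the finite-dimensionality of the $\Ext$ groups — which in turn rests on $\m$ acting trivially and on $M_n$ being finitely generated — is doing real work and must be stated carefully. A secondary, more cosmetic point is that the statement is phrased for $B$-modules while the intended application is to smooth $G$-modules; as in the preceding section this is handled by the equivalence with $\Hecke(G_n\backslash G/G_n)$-modules and the fact that finitely generated $G$-submodules correspond to finitely generated $B$-submodules, so nothing new is needed there. Finally, one should note that this lemma together with the previous one immediately yields the nondegeneracy statement in Theorem 2(4) and in Theorem 7.1 for arbitrary smooth $M$, since the perfect pairings on the finitely generated pieces $M_n$ (already established) pass to the limit via the observation recorded parenthetically in Theorem 2(4) about compatible families of perfect pairings between an inductive and a projective system.
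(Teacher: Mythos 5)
Your setup (injective resolution of $V$ by artinian $B$-modules via Proposition~\ref{matlis2}, the universal property giving $\Hom_B[\varinjlim M_n, I_\ell] = \varprojlim \Hom_B[M_n, I_\ell]$) is exactly the paper's, and you correctly state at the outset that what is needed is that the inverse system of cochain complexes be Mittag--Leffler. But when you come to verify this, you pivot to a different inverse system: you argue that each $\Ext^i_B[M_n,V]$ is a finitely generated $A$-module killed by $\m$, hence finite dimensional over $\C$, hence the tower $\{\Ext^i_B[M_n,V]\}_n$ is ML. That is true, but it is Mittag--Leffler on the \emph{cohomology}, not on the \emph{terms} $\{\Hom_B[M_n,I_\ell]\}_n$ of the cochain complex, and the standard ``cohomology commutes with $\varprojlim$'' theorem needs both. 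Concretely, the Milnor-type short exact sequence
$$0 \rightarrow \textstyle\varprojlim^1 H^{q-1}(C_n^\bullet) \rightarrow H^q(\varprojlim C_n^\bullet) \rightarrow \varprojlim H^q(C_n^\bullet) \rightarrow 0$$
is only available once one knows $\varprojlim^1 C_n^\ell = 0$ for each $\ell$, i.e.\ ML \emph{on the terms}; ML on $H^{q-1}$ then kills the left-hand $\varprojlim^1$. Your argument supplies only the second ingredient, so as written the interchange of cohomology and inverse limit is not justified.

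The missing ingredient is supplied by the observation, which is the paper's actual one, that $\Hom_B[M_n,I_\ell]$ is an \emph{artinian} $A$-module, since $M_n$ is finitely generated and $I_\ell$ is artinian; a tower of artinian modules automatically satisfies Mittag--Leffler. Note that this observation also gives ML on the cohomology for free (subquotients of artinian modules are artinian), so your finite-dimensionality argument for $\Ext^i_B[M_n,V]$ is in fact a consequence rather than a separate input. It is not merely that $\Hom_B[M_n,I_\ell]$ fails to be covered by your finite-dimensionality argument as a technicality: $I_\ell$ is artinian but typically infinite dimensional, so $\Hom_B[M_n,I_\ell]$ need not be finite dimensional, and your route genuinely does not reach it. The fix is small, but without it the crux of the lemma is unproved.
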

\begin{proof}
  Let $$0 \rightarrow V \rightarrow I_0 \rightarrow I_1\rightarrow I_2\rightarrow \cdots $$
  be an injective resolution of $V$ as a $B$-module. 
  Since $V$ is an irreducible $B$-module hence finite dimensional by lemma \ref{schur}, one can assume that
  $I_j$ are artinian as $B$-modules as provided by proposition \ref{matlis2}.

  By definition,
  $\Ext_B^i[M,V]$ is the cohomology of the cochain complex:
  $$0 \rightarrow \Hom_B[M,I_0]\rightarrow \Hom_B[M,I_1]
  \rightarrow \cdots.$$

  Since $I_j$ are artinian, for any finitely generated submodule $M_n \subset M$, $\Hom_B[M_n,I_j]$ are artinian $A$ modules, therefore the Mittag-Leffler condition holds good for the projective system of cochain complexes:
  $$0 \rightarrow \Hom[M_n, I_0] \rightarrow \Hom[M_n,I_1] \rightarrow \Hom[M,I_2]\rightarrow \cdots $$

  Since by definition, $\Hom_B[M,I_i] =\displaystyle{\lim_{\leftarrow}} \Hom_B[M_n,I_i]$, the proof of the lemma is completed by the generality that cohomology commutes with inverse limits
  when Mittag-Leffler condition is satisfied.
  \end{proof}

\section{An application of the  duality theorem}
In this section  we give a sample application of
the Schneider-Stuhler duality theorem to branching laws.

The paper \cite{EP} suggests that branching problems (say from $\SO_{n+1}(F)$ to $\SO_n(F)$) which have such a simple eventual answer for
$\dim \Hom_{\SO_n(F)}[\pi_1,\pi_2]$ where
$\pi_1$ is an irreducible admissible representation of $\SO_{n+1}(F)$ and
$\pi_2$ is an irreducible admissible representation of 
$\SO_n(F)$ (and assume for instance that they are both tempered, or more generally belong to generic Vogan packets) is because higher Ext's are zero:
$$\Ext^i_{\SO_n(F)}[\pi_1,\pi_2] =0 {\rm ~~~for~~~} i>0.$$

The Schneider-Stuhler duality theorem then allows one to answer a natural question: what are the $\SO_n(F)$-submodules of an irreducible admissible module $\pi_1$ of $\SO_{n+1}(F)$ when
restricted to $\SO_n(F)$?
The answer is that typically none (except of course supercuspidals), but that
there are submodules not in the sense that 
$\Hom_{\SO_n(F)}[\pi_2,\pi_1] \not = 0$ but in the sense that 
$\Ext^{d(\pi_2)}_{\SO_n(F)}[\pi_2,\pi_1] \not = 0$.

As a very modest application of Schneider-Stuhler theorem, we  prove the 
following proposition giving a complete classification
of irreducible submodules $\pi$ of the tensor product $\pi_1 \otimes \pi_2$ of two representations $\pi_1,\pi_2$ of $\GL_2(F)$ with the product of their central characters trivial. 
As this proposition shows, it is rare for a non-supercuspidal representation of a subgroup
$H$ to appear as a subrepresentation of a representation of a group $G$ when restricted to $H$
(in this case from $\GL_2(F) \times \GL_2(F) $ to the diagonal $\GL_2(F)$); 
this is to be contrasted with their abundant appearance as a quotient studied in \cite{prasad1}. In fact, it should be considered somewhat of a
surprising conclusion that there can be  a non-supercuspidal submodule at all!

\begin{prop}
Let $\pi_1, \pi_2$ be two irreducible admissible infinite dimensional 
representations of $\GL_2(F)$ with product of their
central characters trivial. Then the following is a complete list of irreducible sub-representations
$\pi$ of $\pi_1 \otimes \pi_2$ as $\PGL_2(F)$-modules.

\begin{enumerate}
\item $\pi$ is a supercuspidal representation of $\PGL_2(F)$, and appears as a quotient of $\pi_1 \otimes \pi_2$.

\item $\pi$ is a twist of the Steinberg representation, which we assume by absorbing the twist in 
$\pi_1$ or $\pi_2$ to be the Steinberg representation $\St$ of $\PGL_2(F)$. Then $\St$ is a 
submodule of $\pi_1 \times \pi_2$ if and only if 
$\pi_1,  \pi_2$ are both irreducible
principal series representations, and $\pi_1 \cong \pi_2^\vee$.
\end{enumerate}
\end{prop}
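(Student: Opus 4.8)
The plan is to compute $\Hom_{\PGL_2(F)}[\pi, \pi_1 \otimes \pi_2]$ for irreducible $\pi$ by exploiting the self-duality of the tensor product together with the Schneider–Stuhler/Aubert–Zelevinsky machinery. First I would observe that since the product of central characters of $\pi_1$ and $\pi_2$ is trivial, the tensor product $\pi_1 \otimes \pi_2$ is naturally a smooth representation of $\PGL_2(F)$, and that $\Hom_{\PGL_2(F)}[\pi, \pi_1 \otimes \pi_2] \cong \Hom_{\PGL_2(F)}[\pi \otimes \pi_1^\vee, \pi_2]$ by adjunction (using that $\pi_1$ is admissible, so $\pi_1^{\vee\vee} \cong \pi_1$). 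Equivalently one rewrites everything as $\Hom_{\GL_2(F)}[\pi_1 \otimes \pi_2 \otimes \pi, \C]$, a trilinear form, so the problem is symmetric in the three representations and I can invoke Prasad's classification of trilinear forms for $\GL_2$ (from \cite{prasad1}): the space of $\GL_2(F)$-invariant trilinear forms on $\pi_1 \otimes \pi_2 \otimes \pi$ is at most one-dimensional, and is one-dimensional exactly when the local epsilon-factor condition is satisfied (and in particular it is always nonzero when none of the representations is "special" in a way that forces $\epsilon = -1$). This handles the \emph{quotient} side and explains why supercuspidal $\pi$ appears as a quotient (case (1)).

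The real content is distinguishing \emph{sub}-representations from quotients, and this is exactly where Schneider–Stuhler enters. For $\pi$ supercuspidal, $d(\pi) = 0$ and $\pi$ is both projective and injective in the relevant block, so "submodule" and "quotient" coincide up to the trilinear form computation; hence case (1). For $\pi$ a twist of Steinberg — which after absorbing the twist I take to be $\St$ itself, with $d(\St) = 1$ since the cuspidal support of $\St$ lies on the Borel whose Levi $\Gm$ has split rank $1$ — a submodule embedding $\St \hookrightarrow \pi_1 \otimes \pi_2$ is an element of $\Hom_{\PGL_2(F)}[\St, \pi_1 \otimes \pi_2]$. The key step is to apply Theorem 2 (the main duality theorem, now known for all smooth $M$, here $M = \pi_1 \otimes \pi_2$) with $V = \St$: we get a perfect pairing
$$\Hom_{\PGL_2(F)}[\St, \pi_1 \otimes \pi_2] \times \Ext^{1}_{\PGL_2(F)}[\pi_1 \otimes \pi_2, D(\St)] \to \Ext^1_{\PGL_2(F)}[\St, D(\St)] \cong \C,$$
and $D(\St) = \mathbf{1}$ is the trivial representation of $\PGL_2(F)$ by the Aubert–Zelevinsky involution (or by Proposition 1 applied to the Borel, where $\St = S(V)$, $\mathbf{1} = Q(V)$ for $V$ the relevant unramified character). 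So $\St$ is a submodule of $\pi_1 \otimes \pi_2$ iff $\Ext^1_{\PGL_2(F)}[\pi_1 \otimes \pi_2, \mathbf{1}] \ne 0$, i.e. iff $\Ext^1_{\GL_2(F)}[\pi_1 \otimes \pi_2, \chi] \ne 0$ for the appropriate character $\chi$ of $\GL_2$ — which by a further adjunction is $\Ext^1_{\GL_2(F)}[\pi_1, \pi_2^\vee]$.

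Thus the problem reduces to computing $\Ext^1_{\GL_2(F)}[\pi_1, \pi_2^\vee]$ for irreducible infinite-dimensional $\pi_1, \pi_2$. The plan here is: if $\pi_1 \not\cong \pi_2^\vee$ then there are no extensions in either degree (one can see $\Hom = 0$ by Schur, and $\Ext^1 = 0$ by noting that two distinct irreducibles in the same Bernstein block with these parameters have no self-extensions across them — or directly via the structure of principal series blocks for $\GL_2$); so one needs $\pi_1 \cong \pi_2^\vee$. Given $\pi_1 \cong \pi_2^\vee$, one computes $\Ext^1_{\GL_2(F)}[\pi_1, \pi_1]$: for $\pi_1$ supercuspidal this is $0$ (projective-injective), for $\pi_1$ a twist of Steinberg it is again $0$ (Steinberg is both a sub and a quotient of a principal series but rigid, and a short computation with the standard two-term resolution from the Borel shows $\Ext^1_{\PGL_2}[\St,\St] = 0$), while for $\pi_1$ an irreducible principal series representation $\Ext^1_{\GL_2(F)}[\pi_1,\pi_1] \ne 0$ — this is the one case where a nonsplit self-extension exists, coming from deforming the inducing character along the unramified line, exactly the phenomenon that $d = 1$ detects. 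This pins down case (2): $\St$ embeds in $\pi_1 \otimes \pi_2$ iff $\pi_1 \cong \pi_2^\vee$ and both are irreducible principal series.

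I expect the main obstacle to be the final $\Ext^1$ computation, specifically being careful about which block each representation lives in and ruling out self-extensions of Steinberg and of supercuspidals cleanly (the supercuspidal case is easy, but the Steinberg self-$\Ext^1$ vanishing requires either citing Bernstein's description of the Iwahori block for $\PGL_2$ or doing the explicit resolution). A secondary subtlety is checking that the adjunctions/self-duality identifications of $\Hom$ and $\Ext$ spaces are compatible with the block decomposition so that Theorem 2 applies with the correct $V$ and $DV$; and verifying that "irreducible submodule of $\pi_1 \otimes \pi_2$ as a $\PGL_2(F)$-module" forces $\pi$ to have trivial central character and lie in one of the two families above (any other irreducible $\pi$ either gives $\Hom = 0$ by the trilinear-form computation in the quotient direction combined with $d(\pi) = 0$, or is a principal series with $d(\pi) = 1$, and then a parallel application of Theorem 2 together with $\Ext^1_{\PGL_2}[\pi_1 \otimes \pi_2, D\pi]$ vanishing — since $D\pi$ is again an irreducible principal series and the relevant $\Ext^1$ vanishes unless $\pi_1 \cong \pi_2^\vee$ with both principal series, but then $D\pi$ would have to match, which a parameter count excludes — shows no such $\pi$ embeds).
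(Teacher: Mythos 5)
Your overall strategy — reduce $\Hom_{\PGL_2(F)}[\pi,\pi_1\otimes\pi_2]$ to an $\Ext^1$ via the Schneider--Stuhler duality, dispose of the supercuspidal case by projectivity, and analyze the Steinberg case through $D(\St)=\mathbf 1$ — is exactly the route the paper takes. However, there are two places where your argument has real problems.

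First, a notational but mathematically consequential slip: once you reduce to $\Ext^1_{\PGL_2(F)}[\pi_1\otimes\pi_2,\mathbf 1]$, the further reduction lands in $\Ext^1_{\GL_2(F),\chi}[\pi_1,\pi_2^\vee]$, \emph{with the central character fixed}, not in $\Ext^1_{\GL_2(F)}[\pi_1,\pi_2^\vee]$ as you write. The distinction matters: in the full category $\M(\GL_2)$, even a supercuspidal $\pi_1$ has $\Ext^1_{\GL_2}[\pi_1,\pi_1]\neq 0$ (deform the central character along the unramified line), so the "projective--injective" vanishing you invoke is simply false there. Your verbal reasoning (projective--injective for supercuspidals, two-term Borel resolution for $\St$) is correct only in $\M(\GL_2,\chi)$, which is where the reduction actually lives. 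It would also be cleaner to get the vanishing $\Ext^1_{\GL_2,\chi}[\St',\St']=0$ and $\Ext^1_{\GL_2,\chi}[\pi_1,\pi_2^\vee]=0$ for $\pi_1\not\cong\pi_2^\vee$ directly from another application of the duality theorem (e.g.\ $\dim\Ext^1_{\GL_2,\chi}[\pi_1,\pi_2^\vee]=\dim\Hom[\pi_2^\vee,D\pi_1]$ when $d(\pi_1)=1$), rather than appealing to Iwahori-block structure.

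Second, and more seriously: your argument that no irreducible \emph{principal series} $\pi$ of $\PGL_2(F)$ can embed in $\pi_1\otimes\pi_2$ is not a proof. You reduce to $\Ext^1_{\PGL_2}[\pi_1\otimes\pi_2,D\pi]=\Ext^1_{\PGL_2}[\pi_1\otimes\pi_2,\pi]$ (using $D\pi=\pi$), but then wave at a "parameter count." The reduction $\Ext^1_{\PGL_2}[\pi_1\otimes\pi_2,\mathbf 1]\cong\Ext^1_{\GL_2,\chi}[\pi_1,\pi_2^\vee]$ that powered your Steinberg analysis used crucially that $D(\St)=\mathbf 1$ is one-dimensional; there is no analogous direct reduction when $D\pi=\pi$ is an infinite-dimensional principal series, so no "$\pi_1\cong\pi_2^\vee$ with $D\pi$ matching" criterion is available. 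The paper closes this by an Euler--Poincar\'e argument: when one of $\pi_1,\pi_2$ is supercuspidal the vanishing is easy; otherwise, by Mackey orbit theory, $\pi_1\otimes\pi_2$ is, up to an admissible module, $\ind_T^{\PGL_2(F)}\chi$ for a character $\chi$ of a maximal split torus $T$, and Frobenius reciprocity reduces $\EP^1_{\PGL_2}[\pi_1\otimes\pi_2,\pi]$ to a computation on $T$, after which comparison with $\dim\Hom[\pi_1\otimes\pi_2,\pi]$ from \cite{prasad1} forces $\Ext^1$ to vanish. You need some version of this Mackey/Euler--Poincar\'e input; the duality theorem alone only exchanges $\Hom$ for $\Ext^1$, it does not compute either one.
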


\begin{proof} Since a supercuspidal representation of $\PGL_2(F)$ is a projective module, it appears as a submodule of $\pi_1 \otimes \pi_2$ if and only if it appears as a quotient module, thus the assertion of the proposition for $\pi$ supercuspidal is clear.

If the representation $\pi$ of $\PGL_2(F)$ is not supercuspidal, then $d(\pi)=1$, and
there is a non-degenerate pairing:
$$\Hom_{\PGL_2(F)}[\pi, \pi_1 \otimes \pi_2]
\times \Ext^{1}_{\PGL_2(F)}[\pi_1 \otimes \pi_2, D(\pi)]  
\rightarrow 
\Ext^{1}_{\PGL_2(F)}[\pi, D(\pi)]   \cong \C, $$
allowing one to calculate $\Hom_{\PGL_2(F)}[\pi, \pi_1 \otimes \pi_2]$
in terms of $\Ext^{1}_{\PGL_2(F)}[\pi_1 \otimes \pi_2, D(\pi)] $ which
in turn is calculated in terms of Euler-Poincare function (replacing $D\pi$ by $\pi$ for notational ease):
\begin{eqnarray*}
  \EP^1_{\PGL_2(F)}[\pi_1 \otimes \pi_2, \pi] & = &
  \dim\Hom_{\PGL_2(F)}[\pi_1 \otimes \pi_2, \pi]
  - \dim \Ext^1_{\PGL_2(F)}[\pi_1 \otimes \pi_2, \pi],
\end{eqnarray*}
which is relatively straightforward to calculate --- as we briefly indicate in the next paragraph --- and then using knowledge about
$\dim\Hom_{\PGL_2(F)}[\pi_1 \otimes \pi_2, \pi]$ from \cite{prasad1} to calculate
$ \dim \Ext^1_{\PGL_2(F)}[\pi_1 \otimes \pi_2, \pi].$

For example, if $\pi$ is an irreducible principal series representation of $\PGL_2(F)$, then $D(\pi) = \pi$,   and in this case it can be seen that 
$$\Ext^1_{\PGL_2(F)}[\pi_1 \otimes \pi_2, \pi] = \Hom_{\PGL_2(F)}[\pi, \pi_1 \otimes \pi_2] = 0.$$
The assertion  on vanishing of $\Ext^1$, one of the main conjectures in \cite{EP} in some generality, is easy to prove if one of $\pi_i$ is supercuspidal; if neither of $\pi_i$ is supercuspidal, then $\pi_1\otimes \pi_2$
is --- by Mackey orbit theory --- 
equal to $\ind_T^{\PGL_2(F)}(\chi)$ (up to an admissible module),
where $\chi:T\rightarrow \C^\times$ is a character on a maximal split torus $T$, and the statement on Euler-Poincare and hence $\Ext^1$ 
on $\PGL_2(F)$ reduces by Frobenius reciprocity to an assertion on the split torus.

We next turn to $\pi = \Sp$ in which case we have $D(\Sp) = \C$. By the 
Schneider-Stuhler theorem, calculation of $\Hom_{\PGL_2(F)}[\Sp, \pi_1 \otimes \pi_2]$ is reduced to that of 
$\Ext^{1}_{\PGL_2(F)}[\pi_1 \otimes \pi_2, \C]$ where $\pi_1$ and $\pi_2$ are two irreducible admissible representations of $\GL_2(F)$ with central characters $\chi$ and $\chi^{-1}$. For this, note the generality:
$$\Ext^1_{\PGL_2(F)}[\pi_1\otimes \pi_2, \C] \cong
\Ext^1_{\GL_2(F), \chi}[\pi_1, \pi_2^\vee] .$$

But by another application of Schneider-Stuhler theorem,
$$
\dim \Ext^1_{\GL_2(F), \chi}[\pi_1, \pi_2^\vee] = \dim \Hom_{\GL_2(F)}[\pi_2^\vee, \pi_1],$$
if $\pi_2^\vee = D(\pi_2^\vee)$ is an irreducible principal series representation of $\GL_2(F)$. Thus,
$$\Hom_{\PGL_2(F)}[\Sp, \pi_1 \otimes \pi_2] = \C,$$
if and only if $\pi_1 \cong \pi_2^\vee$ are irreducible principal series representations of $\GL_2(F)$. The cases of the proposition
we may appear to have missed out when one of $\pi_i$ is the twist
of the Steinberg representation, or when $\pi$ is the trivial representation follows even more easily which we leave to the reader.\end{proof}

An explicit embedding of $\Sp$ into $\ind_T^{\PGL_2(F)}\C$ was constructed in Lemma 5.4 of \cite{prasad1}, which allows one to
get an embedding of $\Sp$ inside $\pi_1\otimes\pi_1^\vee$ for $\pi_1$ any principal series representation of $\GL_2(F)$.

\begin{bibdiv}

  \begin{biblist}

    \bib{sayag}{article}{
    author={Avraham Aizenbud}
    author={ Eitan Sayag}
    title={Homological multiplicities in representation theory of p-adic groups}
journal={  arXiv},
volume = {1709.09886 },
date={2017},
}

\bib{aubert}{article}{
author={Ann-Marie Aubert}
title ={Dualit\'e dans le groupe de Grothendieck de la cat\'egorie des representations lisses
de longueur finie d'un groupe r\'eductif p-adique} 
journal ={Transactions of the American Mathematical Society}
volume = {347}
number={6}
date = {1995}
pages={2179-2189}}

\bib{lectures}{article}{
author= {Bern{\v {s}}te{\u \i }n, I. N.},
title={Representations of $p$-adic groups}, 
journal={Written by Karl E. Rumelhart, Harvard University},
date={Fall 1992}

}

\bib{center}{article}{
author={Bernstein, J. N.},
title={Le "centre'' de Bernstein. 
  Edited by P. Deligne.},
journal={Travaux en Cours, Representations of reductive groups over a local field, 1–32, Hermann, Paris},
date={1984},
}

\bib{DL}{article}{
  author={Deligne, Pierre}
  author={Lusztig, George}
title={Duality for representations of a reductive group over a finite field}
journal={J. of Algebra}
volume={74}
date={1982},
number={ 1}
pages={284 -291}
}

\bib{Matlis}{article}{
author={E. Matlis}
title={Injective modules over Noetherian rings.}
journal={Pacific J. Math.}
Number={8}
date={1958}
pages={511-528} 
}

\bib{prasad1}{article}{
author = { Prasad, Dipendra}
title={Trilinear forms for representations of ${\rm GL}(2)$ and local $\epsilon$-factors},
journal = {  Compositio Math},
volume = {  75},
date = {1990},
number= { 1}
pages=  {1 \ndash 46},
}

\bib{EP}{article}{
author = { Prasad, Dipendra}
title={
  Ext-analogues of branching laws},
journal = {  arXiv},
volume = {1306.2729   },
date = {2013},
}

\bib{raghuram}{article}{
author = { A. Raghuram}
title={A Kunneth theorem for p-adic groups}
journal = { Canad. Math. Bull.}
volume = {  50},
date = {2007},
number= { 3},
pages=  {440 \ndash 446},
}

\bib{schneider-stuhler:sheaves}{article}{
  author={Schneider, Peter},
  author={Stuhler, Ulrich},
  title={Representation theory and sheaves on the Bruhat--Tits building},
  journal={Inst. Hautes \'Etudes Sci. Publ. Math.},
  number={85},
  date={1997},
  pages={97\ndash 191},
  issn={0073-8301},
}

\end{biblist}
\end{bibdiv}

\end{document}